\def\al{\alpha}
\def\Ga{\Gamma}
\def\de{\delta}
\def\De{\Delta}
\def\ep{\varepsilon}
\def\ka{\kappa}
\def\ph{\varphi}
\def\wh{\widehat}
\def\pa{\partial}
\renewcommand{\P}{\mathrm{P}}
\newcommand{\E}{\mathrm{E}}
\newcommand{\R}{\mathbf{R}}
\newcommand{\F}{\mathcal{F}}
\renewcommand{\d}{\mathrm{d}}
\newcommand{\e}{\mathrm{e}}
\newcommand{\fbm}{\textnormal{fBm}}
\newcommand{\bfbm}{\textnormal{bi-fBm}}
\newcommand{\lip}{\textnormal{Lip}}
\newtheorem{stat}{Statement}[section]
\newtheorem{proposition}[stat]{Proposition}
\newtheorem{theorem}[stat]{Theorem}
\newtheorem{lemma}[stat]{Lemma}
\theoremstyle{definition}
\newtheorem{remark}[stat]{Remark}
\numberwithin{equation}{section}
\renewcommand{\leq}{\leqslant}
\renewcommand{\ge}{\geqslant}
\renewcommand{\le}{\leqslant}
\begin{document}

\title{Weak existence of a solution to a differential equation driven by  a very rough
        \fbm%
        \thanks{Research supported in part by NSF grant DMS-1307470.}
}
\author{Davar Khoshnevisan\\University of Utah
        \and
        Jason Swanson\\University of Central Florida
        \and
        Yimin Xiao\\Michigan State University
        \and
        Liang Zhang\\Michigan State University
}

\date{
October 15, 2014}
\maketitle

\begin{abstract}
        We prove that if $f:\R\to\R$ is Lipschitz continuous,
        then for every $H\in(0\,,\nicefrac14]$ there exists a probability space
        on which we can construct a fractional Brownian motion $X$ with
        Hurst parameter $H$,
        together with a process $Y$ that: (i) is H\"older-continuous with H\"older
        exponent $\gamma$ for any $\gamma\in(0\,,H)$; and (ii)
        solves the differential equation
        $\d Y_t = f(Y_t)\,\d X_t$. More significantly,
        we describe the law of the stochastic process
        $Y$ in terms of the solution to a non-linear stochastic partial
        differential equation.

\bigskip

        \noindent{\it Keywords:} Stochastic differential equations; rough paths;
        fractional Brownian motion; fractional Laplacian;
        the stochastic heat equation.

\bigskip

        \noindent{\it \noindent AMS 2000 subject classification:}
        60H10; 60G22; 34F05.
\end{abstract}
\section{Introduction}

Let us choose and fix some $T>0$ throughout,
and consider the differential equation
\[
        \d Y_t = f(Y_t) \,\d X_t\qquad(0< t\le T),
        \tag{DE$_0$}
\]
that is driven by a given, possibly-random, signal $X:=\{X_t\}_{t\in[0,T]}$
and is subject to some given initial value $Y_0\in\R$ which we hold
fixed throughout. The sink/source function $f:\R\to\R$ is also fixed throughout, and is
assumed to be Lipschitz continuous, globally, on all of $\R$.

It is well known---and
not difficult to verify from first principles---that 
when the signal $X$ is a Lipschitz-continuous function, then:
\begin{enumerate}
        \item[(i)] The differential equation (DE$_0$) has a solution $Y$ 
                that is itself Lipschitz continuous;
        \item[(ii)] The Radon--Nikod\'ym derivative $\d Y_t/\d X_t$ exists,
                is continuous, and solves $\d Y_t/\d X_t=f(Y_t)$ for  every $0<t\le T$; and 
        \item[(iii)] The solution to (DE$_0$) is unique.
\end{enumerate}
Therefore, the Lebesgue differentiation theorem implies that
we can recast (DE$_0$) equally well as the solution to the following: As
$\varepsilon\downarrow 0$,
\[
        \frac{Y_{t+\varepsilon} - Y_t}{X_{t+\varepsilon} - X_t} = f(Y_t)+o(1),
        \tag{DE}
\]
for almost every $t\in[0\,,T]$.\footnote{To be completely careful,
we might have to define $0\div 0:=0$
in the cases that $X$ has intervals of constancy. But 
with probability one, this will be a moot issue for
the examples that we will be considering soon.} Note that (DE) always has an ``elementary" solution, even when $X$ is assumed only to be continuous. Namely, if $y$ is a solution to the ODE, $y'=f(y)$, and we set $Y_t=y(X_t)$, then $Y_{t+\ep}-Y_t=f(Y_t)(X_{t+\ep}-X_t) + o(|X_{t+\ep}-X_t|)$. Also note that if $Y$ is a solution to (DE) and $\xi$ is a process that is smoother than $X$ in the sense that $\xi_{t+\ep}-\xi_t=o(|X_{t+\ep}-X_t|)$, then $Y+\xi$ is also a solution to (DE).

Differential equations such as (DE$_0$) and/or
(DE) arise naturally also when $X$ is
H\"older continuous with some positive index $\gamma<1$. One
of the best-studied such examples is when $X$ is Brownian motion on the time interval $[0\,,T]$.
In that case, it is very well known that $X$ is H\"older continuous with index
$\gamma$ for any $\gamma<\nicefrac12.$ It is also very well known
that (DE$_0$) and/or
(DE) has infinitely-many strong solutions
\cite{YamadaWatanabe}, and that there is a unique pathwise solution provided 
that we specify what we mean by the stochastic integral $\int_0^tf(Y_s)\,\d X_s$ 
[consider the integrals of It\^o and Stratonovich, for instance]. 

This  view of
stochastic differential equations plays an important role in the pathbreaking work 
\cite{Lyons:94,Lyons:95}  of
T. Lyons who invented his \emph{theory of rough paths}
in order to solve (DE$_0$) when $X$ is rougher than Lipschitz continuous.
Our reduction of (DE) to (DE$_0$) is motivated strongly by Gubinelli's
theory of \emph{controlled rough paths} \cite{Gubinelli}, which we have
learned from a recent paper of Hairer \cite{Hairer}. In the present context, 
Gubinelli's theory of controlled rough paths basically states that if we
could prove \emph{a priori} that the $o(1)$
term in  (DE) has enough structure, then there is a unique solution to (DE),
and hence (DE$_0$).

Lyons' theory builds on older ideas of Fox \cite {Fox} and
Chen \cite{Chen}, respectively
in algebraic differentiation and integration theory, in order
to construct, for a large family of functions $X$,
 ``rough-path integrals'' $\int_0^tf(Y_s)\,\d X_s$
 that are defined uniquely provided that a certain number of ``multiple
stochastic integrals'' of $X$ are pre specified. Armed with a specified
definition of the stochastic integral $\int_0^tf(Y_s)\,\d Y_s$, one can then
try to solve the differential equation (DE) and/or (DE$_0$) pathwise
[that is $\omega$-by-$\omega$]. To date, this program
has been particularly successful when $X$ is H\"older continuous
with index $\gamma\in[\nicefrac13\,,1]$: When $\gamma\in(\nicefrac12\,,1]$
one uses Young's theory of integration; $\gamma=\nicefrac12$ is covered
in essence by martingale theory; and Errami and Russo \cite{ErramiRusso}
and Chapter 5 of 
Lyons and Qian \cite{LyonsQian} both discuss the more difficult case
$\gamma\in[\nicefrac13\,,\nicefrac12)$. There is also mounting evidence that 
one can extend this strategy to cover values 
of $\gamma\in[\nicefrac14\,,1]$---see \cite{%
AlosLeonNualart,AlosMazetNualart,BurdzySwanson,CoutinVictoir,%
CoutinFritzVictoir,GradinaruRussoVallois,RussoVallois}---and
possibly even $\gamma\in(0\,,\nicefrac14)$---see the two recent 
papers by  Unterberger \cite{Unterberger} and
Nualart and Tindel \cite{NualartTindel}.

As far as we know, very little is known about the probabilistic structure of
the solution when $\gamma<\nicefrac12$ [when the solution
is in fact known to exist]. Our goal  is to say something about the
probabilistic structure of {\it a} solution for a concrete, but highly interesting,
family of choices for $X$ in (DE).

A standard fractional Brownian motion [fBm] with Hurst 
parameter $H\in(0,1)$---abbreviated $\fbm(H)$---is a continuous, 
mean-zero Gaussian process 
$X:=\{X_t\}_{t\ge0}$ with $X_0=0$ a.s. and
\begin{equation}\label{fBm:d}
        \E\left( |X_t - X_s|^2\right) = |t-s|^{2H}\qquad(s,t\ge 0).
\end{equation}
Note that fBm$(\nicefrac12)$ is a standard Brownian 
motion. We refer to any constant multiple of a standard fractional 
Brownian motion, somewhat more generally, as fractional Brownian motion
[fBm].

Here, we study the differential equation (DE) in the special case that 
$X$ is $\fbm(H)$ with 
\begin{equation}
        0<H\le \tfrac14.
\end{equation}
It is well known that
\eqref{fBm:d} implies that $X$ is H\"older continuous with index $\gamma$ for
every $\gamma<H$, up to a modification.\footnote{
In other words,  $X\in\cap_{\gamma\in(0,H)}C^\gamma([0\,,T])$ a.s.,
where $C^\gamma([0\,,T])$ denotes as usual the collection of all continuous
functions $f:[0\,,T]\to\R$ such that $|f(t)-f(s)|\le \text{const}\cdot|t-s|^\gamma$ 
uniformly for all $s,t\in[0\,,T]$.}  Since $H\in(0\,,\nicefrac14]$,
we are precisely in the regime where not a great deal is known about (DE).

In analogy with the classical literature on stochastic differential equations
\cite{YamadaWatanabe} the following theorem establishes the ``weak existence''
of a solution to (DE), provided that we interpret the little-$o$ term
in (DE$_0$), somewhat generously, as ``little-$o$ in probability.'' Our theorem
says some things about the law of the solution as well.

\begin{theorem}\label{th:SDE}
        Let $g:\R\to\R$ be Lipschitz continuous uniformly on all of $\R$.
        Choose and fix $H\in(0\,,\nicefrac14]$.
        Then there exists a probability space $(\Omega\,,\F,\P)$ on which
        we can construct a fractional Brownian motion $X$, with Hurst parameter $H$,
        together with a stochastic process $Y\in\cap_{\gamma\in(0,H)} C^\gamma([0\,,T])$
        such that 
        \begin{equation}
                \lim_{\varepsilon\downarrow 0}\sup_{t\in(0,T]}
                \P\left\{ \left| \frac{Y_{t+\varepsilon} - Y_t}{X_{t+\varepsilon} - X_t}
                - g(Y_t)\right|>\delta\right\}=0\qquad\text{for all $\delta>0$}.
        \end{equation}
        Moreover, $Y:=\{Y_t\}_{t\in[0,T]}$ has the same law as $\{\ka_H u_t(0)\}_{t\in[0,T]}$,
        where 
        \begin{equation}\label{kaHdef}
                \ka_H := \left(\frac{(1-2H)\Ga(1-2H)}{2\pi H}\right)^{1/2},
        \end{equation}
        and $u$ denotes the mild solution to the nonlinear
        stochastic partial differential equation,
        \begin{equation}\label{SDEmodified}
                \frac{\pa}{\pa t} u_t(x)  = \frac12(\De_{1/(2-4H)}u_t)(x)
                + \frac{1}{2^{(1-2H)/2}\cdot\kappa_H^2}\, 
                g(\ka_H u_t(x))\dot{W}_t(x),
        \end{equation}
        on $(t\,,x)\in(0\,,T]\times\R$, subject to $u_0(x) \equiv Y_0$ for all $x\in\R$,
        where $\dot{W}$ denotes a space-time white noise, 
        and $\Delta_{\alpha/2} := -(-\Delta)^{\alpha/2}$ 
        denote the fractional Laplace operator, which is usually defined by the property that 
        $(\De_{\al/2}\ph)^{\widehat{\phantom{\i}}}(\xi) = 
        -|\xi|^\al \wh\ph(\xi)$;
        see Jacob \cite[Vol.\ II]{Jacob}. 
\end{theorem}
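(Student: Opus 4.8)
\emph{Proof idea.} The plan is to realize $Y$ directly as $Y_t:=\ka_H u_t(0)$, where $u$ is the mild solution of \eqref{SDEmodified}, and to read off the fractional Brownian motion $X$ from the Gaussian structure of the driving white noise; the differential equation (DE) is then obtained from a localization analysis of the Walsh integral that represents $u$. Write $\al:=1/(1-2H)$, so that the operator in \eqref{SDEmodified} is $\tfrac12\De_{\al/2}$, and let $\wt p_t$ denote its heat kernel, $\wh{\wt p}_t(\xi)=\e^{-t|\xi|^\al/2}$. Because $H\in(0\,,\nicefrac14]$ we have $\al\in(1\,,2]$, so Dalang's condition $\int_\R(1+|\xi|^\al)^{-1}\,\d\xi<\infty$ holds and the standard theory of parabolic SPDEs driven by space--time white noise yields a unique random-field mild solution $u$ with $\sup_{t\le T}\sup_{x\in\R}\E(|u_t(x)|^p)<\infty$ for every $p\ge1$ and with the uniform modulus of continuity
\[
        \E\big(|u_t(x)-u_s(y)|^p\big)\le C_p\big(|t-s|^{pH}+|x-y|^{p(\al-1)/2}\big),\qquad s,t\in[0\,,T],\ x,y\in\R.
\]
In particular $t\mapsto u_t(0)$, and hence $Y$, belongs to $\cap_{\ga\in(0,H)}C^\ga([0\,,T])$; together with the fact that $u$ is uniquely characterized by \eqref{SDEmodified}, this disposes of the regularity assertion and the ``moreover'' clause.

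Next I would identify the Gaussian skeleton. Let $V_t:=\int_0^t\!\int_\R \wt p_{t-s}(-y)\,W(\d s\,\d y)$ be the solution of the corresponding additive-noise equation. Using $\int_\R\wt p_a(y)\wt p_b(y)\,\d y=\wt p_{a+b}(0)$ together with $\wt p_a(0)=c_\al a^{-1/\al}$, where $c_\al=\pi^{-1}2^{1-2H}(1-2H)\Ga(1-2H)$ and $1/\al=1-2H$, a direct computation gives
\[
        \E(V_sV_t)=\frac{c_\al}{4H}\Big[(s+t)^{2H}-|t-s|^{2H}\Big],\qquad\text{so that}\qquad \E(V_t^2)=\ka_H^2\,t^{2H};
\]
this is exactly how the constant $\ka_H$ of \eqref{kaHdef} arises. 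Consequently $\eta_t:=2^{-(1-2H)/2}\ka_H^{-1}V_t$ is, up to a multiplicative constant, a bifractional Brownian motion with indices $(\nicefrac12\,,2H)$, whose self-similarity index is $H$. By the Lei--Nualart decomposition of bifractional Brownian motion --- enlarging $(\Om\,,\F,\P)$ by an independent source of randomness if necessary --- I may write $\eta_t=X_t+\xi_t$, where $X$ is a fractional Brownian motion with Hurst parameter $H$ and $\xi$ has a.s.\ $C^\infty$ sample paths on $(0\,,T]$ (so that $\xi_{t+\ep}-\xi_t=O(\ep)$ locally uniformly there). This $X$ is the process required by the theorem.

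The heart of the matter is to localize the increments of $Y$. Writing $\si(v):=2^{-(1-2H)/2}\ka_H^{-2}g(\ka_H v)$, the mild formulation gives
\[
        Y_{t+\ep}-Y_t=\ka_H\!\int_t^{t+\ep}\!\!\!\int_\R\!\wt p_{t+\ep-s}(-y)\,\si(u_s(y))\,W(\d s\,\d y)+\ka_H\!\int_0^{t}\!\!\!\int_\R\!\big[\wt p_{t+\ep-s}-\wt p_{t-s}\big](-y)\,\si(u_s(y))\,W(\d s\,\d y).
\]
In both integrals I would replace $\si(u_s(y))$ by $\si(u_t(0))=2^{-(1-2H)/2}\ka_H^{-2}g(Y_t)$; since $\ka_H\,\si(u_t(0))=2^{-(1-2H)/2}\ka_H^{-1}g(Y_t)$, the two resulting main terms add up precisely to $g(Y_t)\,(\eta_{t+\ep}-\eta_t)$. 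The replacement errors are controlled via $\lip(\si)$, the $L^2$ modulus of continuity of $u$ recorded above, and the exact $\ep^{2H}$-order of magnitude of the Gaussian quantities $\int_0^\ep\|\wt p_a\|_{L^2(\R)}^2\,\d a$ and $\int_0^\infty\!\int_\R[\wt p_{a+\ep}-\wt p_a]^2$: for the near-diagonal integral the error is $O(\ep^{2H})$, a full factor $\ep^{H}$ smaller than the main term, while for the second integral a splitting of the time interval at $t-\de$ shows that the contribution of $[t-\de\,,t]$ is a relative error of order $\de^{H}$ and that of $[0\,,t-\de]$ is $O(\ep\,\de^{-c})$, negligible as $\ep\downarrow0$ for fixed $\de$. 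This yields
\[
        Y_{t+\ep}-Y_t=g(Y_t)\,(\eta_{t+\ep}-\eta_t)+R_\ep,\qquad \|R_\ep\|_{L^2(\P)}=o(\ep^H),
\]
with all the estimates uniform in $t$.

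Finally, combining this with $\eta=X+\xi$ gives $Y_{t+\ep}-Y_t-g(Y_t)(X_{t+\ep}-X_t)=g(Y_t)(\xi_{t+\ep}-\xi_t)+R_\ep$; since $\xi_{t+\ep}-\xi_t$ is negligible relative to $\ep^H$, $g(Y_t)$ has moments bounded uniformly in $t$, and $X_{t+\ep}-X_t$ is centered Gaussian of variance of exact order $\ep^{2H}$ --- so that $\P\{|X_{t+\ep}-X_t|<\la\ep^H\}$ is small uniformly in $t$ for small $\la$ --- a union bound together with Chebyshev's inequality and $\|R_\ep\|_{L^2(\P)}=o(\ep^H)$ gives $\sup_{t}\P\{|\frac{Y_{t+\ep}-Y_t}{X_{t+\ep}-X_t}-g(Y_t)|>\de\}\to0$ as $\ep\downarrow0$, for every $\de>0$. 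The main obstacle is the uniform localization estimate of the third step: one must show that replacing $\si(u_s(y))$ by its value at the ``current point'' $(t\,,0)$ costs strictly less than the order $\ep^H$ of the increment itself, \emph{uniformly} in $t$ --- which is where the combination of the sharp order of magnitude of the Gaussian heat-kernel functionals with the uniform space--time $L^2$-Hölder bound for $u$ is really used; the first two steps invoke only Dalang's well-posedness theory and the Lei--Nualart decomposition.
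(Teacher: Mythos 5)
Your proposal is correct and follows essentially the same route as the paper: you realize $Y_t=\ka_H u_t(0)$, extract the fBm $X$ from the additive-noise (bifractional) process via the Lei--Nualart decomposition, freeze the nonlinearity at $(t,0)$ in the Walsh-integral representation of the increment after a recent/far-past time splitting, and finish with Chebyshev's inequality against the Gaussian scaling of $X_{t+\ep}-X_t$. The only substantive difference is bookkeeping: the paper splits at $t-\ep^{a}$ with $a=1/(1+H)$ to get the explicit rate $\ep^{\mathcal{G}_H}$, $\mathcal{G}_H=2H/(1+H)>H$, whereas your fixed-$\de$ splitting (with the implicit second limit $\de\downarrow0$, and the far-past term bounded using only boundedness of moments, exactly as in the paper's Lemma \ref{lem:J1a}) yields the qualitative bound $o(\ep^{H})$, which suffices for the stated theorem.
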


The preceding can be extended to all of $H\in(0\,,\nicefrac12)$
by replacing, in \eqref{heat} below, the space-time white noise $\dot{W}_t(x)$ by a generalized
Gaussian random field $\psi_t(x)$
whose covariance measure is described by
\begin{equation}
        \text{Cov}(\psi_t(x)\,,\psi_s(y))= \frac{\delta_0(t-s)}{%
        |x-y|^\theta},
\end{equation}
for a suitable choice of  $\theta\in(0\,,1)$. We will not pursue this matter
further here since we do not know how to
address the more immediately-pressing question of uniqueness
in Theorem \ref{th:SDE}. Namely, we do not know a good answer to
the following: ``\emph{What are [necessarily global] non-trivial conditions that
ensure that our solution $Y$ is unique in law}''? 

Throughout this paper, $A_q$ denotes a finite constant that depends critically
only on a [possibly vector-valued] parameter $q$ of interest. We will
not keep track of parameter dependencies for the parameters
that are held fixed throughout; they include 
$\alpha$ and $H$ of \eqref{alpha:H} below, as well as the functions
$g$ [see Theorem \ref{th:SDE}] and $f$ [see \eqref{f} below]. 

The value of $A_q$ might change from line to line, and sometimes even within
the line.

In the absence of interesting parameter dependencies,
we write a generic ``const'' in place of ``$A$.''

We prefer to write $\|\cdots\|_k$ in place of 
$\|\cdot\|_{L^k(\Omega)}$, where $k\in[1\,,\infty)$ can be an arbitary real number. 
That is, for every random variable $Y$, we set
\begin{equation}
        \|Y\|_k := \left\{ \E\left( |Y|^k\right)\right\}^{1/k}.
\end{equation}

On a few occasions we might write $\lip_\varphi$ for the optimal Lipschitz
constant of a function $\varphi:\R\to\R$; that is,
\begin{equation}
        \lip_\varphi := \sup_{-\infty<x<y<\infty}\left| \frac{\varphi(x)-\varphi(y)}{x-y}\right|.
\end{equation}

\section{Some Gaussian random fields}
In this section we recall a decomposition theorem of {Lei} and {Nualart}
\cite{LeiNualart} which will play an important role in this paper; 
see Mueller and Wu \cite{MuellerWu} for a related set of ideas.
We also work out an example that showcases further
the Lei--Nualart theorem.

\subsection{\fbm\ and \bfbm}
Suppose that $H\in(0\,,1)$ and  $K\in(0\,,1]$ are fixed numbers.\footnote{%
Although we are primarily interested in $H\in(0\,,\nicefrac14]$, 
we study the more general case $H\in(0\,,1)$ in this section.} 
A standard bifractional 
Brownian motion, abbreviated as $\bfbm(H\,,K)$, is a continuous mean-zero Gaussian 
process $B^{H,K}:=\{B^{H,K}_t\}_{t\ge 0}$ with $B^{H,K}_0:=0$ a.s.\
and covariance function
\begin{equation}
        \textnormal{Cov}\left(B^{H,K}_t,\,B^{H,K}_{t'}\right) = 
        2^{-K}\left(\left[t^{2H}+(t')^{2H}\right]^K-|t-t'|^{2HK}\right),
\end{equation}
for all $t',t\ge 0$.
Note that $B^{H,1}$ is a fractional Brownian motion with Hurst parameter
$H\in(0\,,1)$. More generally, any constant multiple of 
a standard bifractional Brownian motion will be referred as bifractional Brownian motion.

Bifractional Brownian motion was invented by {Houdr\'e} and {Villa} \cite{HoudreVilla} as 
a concrete example (besides fractional Brownian motion) of a family
of processes that yield natural ``quasi--helices'' in the sense of 
Kahane \cite{Kahane} and/or  ``screw lines'' of classical Hilbert-space theory
\cite{vonNeumannSchoenberg,Schoenberg}. Some sample path properties of 
$\bfbm(H\,,K)$ have been studied by  Russo and Tudor \cite{RT}, Tudor and Xiao \cite{TX}
and Lei and Nualart \cite{LeiNualart}. In particular, the following decomposition
theorem is due to Lei and Nualart  \cite[Proposition 1]{LeiNualart}.

\begin{proposition}
\label{pr:LeiNualart}
        Let $B^{H,K}$ be a $\bfbm(H\,,K)$. There exists a fractional Brownian motion $B^{HK}$
        with Hurst parameter $HK$ and a stochastic 
        process $\xi$ such that $B^{H,K}$ and $\xi$ are independent and, outside a single $\P$-null set,
        \begin{equation}
                B^{H,K}_t=2^{(1-K)/2} B^{HK}_t + \xi_t\qquad\text{for all $t\ge0$}.
        \end{equation}
        Moreover, the process $\xi$ is a centered Gaussian process, with 
        sample functions that are infinitely differentiable 
        on $(0\,, \infty)$ and absolutely continuous on $[0\,,\infty)$.
\end{proposition}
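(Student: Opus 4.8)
The plan is to realise $\xi$ explicitly as a Wiener integral against a Brownian motion that is independent of $B^{H,K}$; once this is done, independence is built into the construction, and everything reduces to two things: (a) choosing the integrand so that $\xi$ has the right covariance, and (b) reading off the path regularity of $\xi$ from that integrand. The single identity driving the computation is the elementary formula
\begin{equation*}
	z^K=\frac{K}{\Ga(1-K)}\int_0^\infty\bigl(1-\e^{-zu}\bigr)u^{-1-K}\,\d u\qquad(z\ge0,\ 0<K<1),
\end{equation*}
which one proves by the substitution $v=zu$ followed by a single integration by parts (the boundary terms vanish exactly because $0<K<1$). Combined with the algebraic rearrangement $(1-\e^{-au})+(1-\e^{-bu})-(1-\e^{-(a+b)u})=(1-\e^{-au})(1-\e^{-bu})$, taking $a=t^{2H}$, $b=s^{2H}$ yields
\begin{equation*}
	t^{2HK}+s^{2HK}-\bigl(t^{2H}+s^{2H}\bigr)^K=\frac{K}{\Ga(1-K)}\int_0^\infty\bigl(1-\e^{-t^{2H}u}\bigr)\bigl(1-\e^{-s^{2H}u}\bigr)u^{-1-K}\,\d u ,
\end{equation*}
so in particular the left-hand side is a genuine (positive-definite) covariance kernel. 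The case $K=1$ is degenerate — there $\xi\equiv0$ and $\bfbm(H,1)=\fbm(H)$, so the statement is trivial — and we assume $0<K<1$ below.

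After enlarging the probability space if necessary, let $W=\{W_u\}_{u\ge0}$ be a standard Brownian motion independent of $B^{H,K}$, put $c_{H,K}:=\bigl(2^{-K}K/\Ga(1-K)\bigr)^{1/2}$, and define
\begin{equation*}
	\xi_t:=c_{H,K}\int_0^\infty\bigl(1-\e^{-t^{2H}u}\bigr)u^{-(1+K)/2}\,\d W_u\qquad(t\ge0).
\end{equation*}
For each fixed $t\ge0$ the integrand is $O\bigl(u^{(1-K)/2}\bigr)$ near $u=0$ and $O\bigl(u^{-(1+K)/2}\bigr)$ near $u=\infty$, hence lies in $L^2((0\,,\infty),\d u)$ since $0<K<1$; thus $\xi$ is a well-defined centered Gaussian process with $\xi_0=0$, and by the Wiener isometry together with the displayed identity,
\begin{equation*}
	\E[\xi_t\xi_s]=c_{H,K}^2\int_0^\infty\bigl(1-\e^{-t^{2H}u}\bigr)\bigl(1-\e^{-s^{2H}u}\bigr)u^{-1-K}\,\d u=2^{-K}\Bigl(t^{2HK}+s^{2HK}-\bigl(t^{2H}+s^{2H}\bigr)^K\Bigr).
\end{equation*}
Now define $B^{HK}_t:=2^{-(1-K)/2}\bigl(B^{H,K}_t-\xi_t\bigr)$. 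Since $W\perp B^{H,K}$ we have $\E[B^{H,K}_t\xi_s]=0$, so $\E[B^{HK}_tB^{HK}_s]=2^{-(1-K)}\bigl(\textnormal{Cov}(B^{H,K}_t,B^{H,K}_s)+\E[\xi_t\xi_s]\bigr)$, and plugging in the $\bfbm$ covariance and the line above, the $\bigl(t^{2H}+s^{2H}\bigr)^K$ terms cancel and one is left with $\frac12\bigl(t^{2HK}+s^{2HK}-|t-s|^{2HK}\bigr)$. Hence $B^{HK}$ is a centered Gaussian process with exactly the covariance of an $\fbm(HK)$, i.e.\ it \emph{is} an $\fbm(HK)$; by construction $B^{H,K}_t=2^{(1-K)/2}B^{HK}_t+\xi_t$ holds identically, and $\xi$ is independent of $B^{H,K}$.

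It remains to prove the path regularity of $\xi$, which is the only delicate point. Differentiating the integrand $n$ times in $t$ produces kernels of the form (a polynomial in $t^{2H}u$ that vanishes at $0$) $\times\,\e^{-t^{2H}u}\times$ (a power of $t$); on any compact $J\subset(0\,,\infty)$ these, times $u^{-(1+K)/2}$, are bounded in $L^2(\d u)$ uniformly over $t\in J$ (the exponential controls large $u$; the vanishing of the polynomial at $0$ keeps small $u$ integrable). Standard Gaussian arguments then show the corresponding Wiener integrals $\xi^{(n)}_t$ are mean-square differentiable on $(0\,,\infty)$ with $\|\xi^{(n)}_t-\xi^{(n)}_s\|_2\le A_J|t-s|$ for $t,s\in J$; being Gaussian they admit continuous modifications by Kolmogorov's criterion, and a mean-square fundamental theorem of calculus, promoted pathwise by continuity, identifies $\xi^{(n)}$ with the $n$-th derivative of $\xi$. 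Thus $\xi\in C^\infty(0\,,\infty)$ a.s. For absolute continuity on $[0\,,\infty)$ one computes from $\xi^{(1)}_t=2Hc_{H,K}t^{2H-1}\int_0^\infty u\,\e^{-t^{2H}u}u^{-(1+K)/2}\,\d W_u$ that $\|\xi^{(1)}_t\|_2^2=A\,t^{4H-2}\int_0^\infty u^{1-K}\e^{-2t^{2H}u}\,\d u=A'\,t^{2HK-2}$; since $HK>0$, $\int_0^T\|\xi^{(1)}_t\|_2\,\d t\asymp\int_0^T t^{HK-1}\,\d t<\infty$, so $\int_0^T|\xi^{(1)}_t|\,\d t<\infty$ a.s.\ for every $T$, and with $\xi_0=0$ and $\xi_t=\int_0^t\xi^{(1)}_r\,\d r$ this gives absolute continuity of $\xi$ on $[0\,,\infty)$. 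Finally, working with continuous modifications of $B^{H,K}$ and $\xi$ (so that $B^{HK}$ is continuous as well), the identity $B^{H,K}_t=2^{(1-K)/2}B^{HK}_t+\xi_t$ holds for all $t\ge0$ outside a single $\P$-null set, completing the argument. The main obstacle is precisely this last regularity step — especially the $t\downarrow0$ estimate $\|\xi^{(1)}_t\|_2\asymp t^{HK-1}$ and its consequence for absolute continuity at the origin; by contrast, the covariance bookkeeping is routine once the integral representation of the kernel $t^{2HK}+s^{2HK}-(t^{2H}+s^{2H})^K$ has been found.
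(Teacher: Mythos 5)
Your proof is correct. The paper itself gives no proof of this proposition — it is quoted from Lei and Nualart \cite{LeiNualart} — but the explicit representation \eqref{LeiNualart_explicit} recorded immediately after it is exactly the Wiener-integral kernel you use, so your argument is a faithful self-contained reconstruction of the cited construction: the integral identity $z^K=\frac{K}{\Ga(1-K)}\int_0^\infty(1-\e^{-zu})u^{-1-K}\,\d u$, the covariance bookkeeping that turns $B^{H,K}\mp\xi$ into $2^{(1-K)/2}\times\fbm(HK)$ (the sign being immaterial by independence), and the estimate $\|\xi'_t\|_2\asymp t^{HK-1}$ yielding smoothness on $(0,\infty)$ and absolute continuity at the origin.
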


In fact, it is shown in \cite[eq.'s (4) and (5)]{LeiNualart} that we can write
\begin{equation}\label{LeiNualart_explicit}
        \xi_t = \left(\frac{K}{2^K\Ga(1-K)}\right)^{1/2}\int_0^\infty
        \frac{1 - \exp(-st^{2H})}{s^{(1+K)/2}}\,\d W_s,
\end{equation}
where $W$ is a standard Brownian motion that is independent of $B^{H,K}$.

\subsection{The linear heat equation}

Let $\wh{\phantom{m}}$ denote the Fourier transform,
normalized so that for every rapidly-decreasing function $\ph:\R\to\R$,
\begin{equation}
        \wh\ph(\xi) = \int_{-\infty}^\infty \e^{i\xi x}\ph(x)\,\d x
        \qquad(\xi\in\R).
\end{equation}
Let $\Delta_{\alpha/2} := -(-\Delta)^{\alpha/2}$ 
denote the fractional Laplace operator. 

Consider the linear stochastic PDE
\begin{equation}\label{SHE-linear}
        \frac{\pa}{\pa t}v_t (x) = \frac12(\Delta_{\alpha/2} v_t)(x) +  \dot{W}_t(x),
\end{equation}
where $v_0(x)\equiv 0$ and $\dot{W}_t(x)$ denotes space-time white noise;
that is,
\begin{equation}
        \dot{W}_t(x)= \frac{\partial^2W_t(x)}{\partial t\partial x},
\end{equation}
in the sense of generalized random fields \cite[Chapter 2, \S2.4]{GV}, for
a space-time Brownian sheet $W$. 

According to the theory of
Dalang \cite{Dalang:99}, the condition
\begin{equation}\label{cond:Dalang}
        1 < \alpha\le 2
\end{equation}
is necessary and sufficient in order for \eqref{SHE-linear} to have a solution
$v$ that is a random function.
{Lei} and {Nualart} \cite{LeiNualart}
have shown that---in the case that $\alpha=2$---the process $t\mapsto v_t(x)$ is
a suitable \bfbm\ for every fixed $x$. In this section we apply the 
reasoning of \cite{LeiNualart}  to the
present setting in order to show that the same can be said
about the solution to \eqref{SHE-linear} for every possible choice of
$\alpha\in(1\,,2]$.

Let $p_t(x)$ denote the fundamental solution to the fractional heat operator $(\pa/\pa t) - 
\tfrac12\Delta_{\alpha/2}$; that is, the function $(t\,;x\,,y)\mapsto p_t(y-x)$
is the transition probability function for a symmetric stable-$\alpha$ L\'evy process,
normalized as follows (see Jacob \cite[Vol.\ III]{Jacob}):
\begin{equation}\label{eq:chf}
        \hat{p}_t(\xi) = \exp\left(-t|\xi|^\alpha/2\right)\qquad
        \textnormal{($t\ge 0$, $\xi\in\R$).}
\end{equation}
The Plancherel theorem implies the following: For all $t>0$,
\begin{equation}\label{eq:L2:p}
        \|p_t\|_{L^2(\R)}^2 = \frac1{2\pi}\|\wh p_t\|_{L^2(\R)}^2
        = \frac{1}{\pi}\int_0^\infty 
        \e^{-t\xi^\alpha}\,\d\xi
        =\frac{\Gamma(1/\alpha)}{\alpha\pi t^{1/\al}}.
\end{equation}
Let us mention also the following variation: By
the symmetry of the heat kernel, $\|p_t\|_{L^2(\R)}^2
=(p_t*p_t)(0)=p_{2t}(0)$. Therefore, the inversion theorem shows that
\begin{equation}
        p_t(0) = \sup_{x\in\R} p_t(x) =\frac{2^{1/\al}\Ga(1/\al)}{\al\pi t^{1/\al}}
        \qquad(t>0).
\end{equation}

Now we can return to the linear stochastic heat equation
\eqref{SHE-linear}, and write its solution $v$, in mild form, as follows:
\begin{equation}\label{v}
        v_t(x) =  \int_{(0,t)\times\R} p_{t-s}(y-x)\, W(\d s\,\d y).
\end{equation}
It is well known \cite[Chapter 3]{Walsh} 
that $v$ is a continuous, centered Gaussian random field.
Therefore, we combine
\eqref{eq:chf}, and \eqref{eq:L2:p}, using
Parseval's identity, in order to see that
\begin{equation}\begin{split}
        \text{Cov}\left( v_t(x)\,,v_{t'}(x)\right) 
                &= \int_0^{t\wedge t'}\d s\int_{-\infty}^\infty\d y\
                p_{t-s}(y)p_{t'-s}(y)\\
        &= \frac1{2\pi}\int_0^{t\wedge t'}\d s\int_{-\infty}^\infty\d\xi\
                \wh p_{t-s}(\xi)\wh p_{t'-s}(\xi)\\
        &= \frac{\Ga(1/\al)}{\pi\al}\int_0^{t\wedge t'} \left({
                \frac{t + t' - 2s}2}\right)^{-1/\al}\d s.
\end{split}\end{equation}
We use the substitution $r=(t+t'-2s)/2$ and note 
that $(t+t')/2-(t\wedge t')=|t-t'|/2$ in order to conclude that
\begin{equation}
        \text{Cov}\left( v_t(x)\,,v_{t'}(x)\right) 
        = c_\al^22^{(1-\al)/\al}\left(
        |t' + t|^{(\al-1)/\al} - |t' - t|^{(\al-1)/\al} \right),
\end{equation}
where
\begin{equation}\label{A:alpha}
        c_\alpha:=   \left(\frac{\Gamma(1/\alpha)}{\pi(\alpha-1)}
        \right)^{1/2}.
\end{equation}
That is, we have verified the following:

\begin{proposition}\label{pr:bifBM:linear}
        For every fixed $x\in\R$, the stochastic process $t\mapsto c^{-1}_\alpha v_t(x)$
        is a \bfbm$(\nicefrac12\,,(\alpha-1)/\alpha)$, where
        $c_\alpha$ is defined in \eqref{A:alpha}. Therefore,
        Proposition \ref{pr:LeiNualart} allows us to write
        \begin{equation}
                v_t(x) =  c_\al2^{1/(2\al)} X_t + R_t
                \qquad(t\ge 0),
        \end{equation}
        where $\{X_t\}_{t\ge 0}$ is \fbm$((\alpha-1)/(2\alpha))$
        and $\{R_t\}_{t\ge0}$ is a centered Gaussian process that is: 
        \begin{enumerate}
                \item[(i)] Independent of $v_\bullet(x)$;
                \item[(ii)] Absolutely continuous on $[0\,,\infty)$, a.s.; and 
                \item[(iii)] Infinitely differentiable on $(0\,,\infty)$, a.s.
       \end{enumerate}
\end{proposition}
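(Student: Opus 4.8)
The plan is to extract the finite-dimensional law of $t\mapsto v_t(x)$ from the covariance computation that immediately precedes the statement, and then to hand the result to the Lei--Nualart decomposition recorded in Proposition~\ref{pr:LeiNualart}.

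First I would invoke \cite[Chapter 3]{Walsh}: the mild solution \eqref{v} is a continuous, centered Gaussian random field, so for each fixed $x$ the process $t\mapsto v_t(x)$ is a centered Gaussian process and is therefore determined by its covariance function. The computation displayed just above the proposition --- carried out from the mild form \eqref{v}, Parseval's identity, the characteristic function \eqref{eq:chf}, and the $L^2$ identity \eqref{eq:L2:p} --- yields
\[
        \text{Cov}\bigl(v_t(x)\,,v_{t'}(x)\bigr)
        = c_\alpha^2\, 2^{(1-\alpha)/\alpha}\Bigl(|t+t'|^{(\alpha-1)/\alpha}-|t-t'|^{(\alpha-1)/\alpha}\Bigr).
\]
Put $H:=\nicefrac12$ and $K:=(\alpha-1)/\alpha$; by Dalang's condition \eqref{cond:Dalang} we have $1<\alpha\le 2$, so $K\in(0\,,1]$ is a legitimate bifractional index. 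Then $2H=1$, $2HK=(\alpha-1)/\alpha$, and $2^{-K}=2^{(1-\alpha)/\alpha}$, so the right-hand side above is exactly $c_\alpha^2$ times $2^{-K}\bigl([t^{2H}+(t')^{2H}]^K-|t-t'|^{2HK}\bigr)$, the covariance of a standard $\bfbm(H\,,K)$. Since both processes are centered and Gaussian, this shows that $t\mapsto c_\alpha^{-1}v_t(x)$ is a $\bfbm(\nicefrac12\,,(\alpha-1)/\alpha)$.

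Next I would apply Proposition~\ref{pr:LeiNualart} to the process $B^{H,K}:=c_\alpha^{-1}v_\bullet(x)$ with the above $H$ and $K$. Here $HK=(\alpha-1)/(2\alpha)$ and, since $1-K=1/\alpha$, one has $2^{(1-K)/2}=2^{1/(2\alpha)}$. The proposition therefore produces an $\fbm\bigl((\alpha-1)/(2\alpha)\bigr)$, say $X$, together with a process $\xi$ that is independent of $B^{H,K}$, such that $c_\alpha^{-1}v_t(x)=2^{1/(2\alpha)}X_t+\xi_t$ outside a single $\P$-null set; equivalently $v_t(x)=c_\alpha 2^{1/(2\alpha)}X_t+R_t$ with $R:=c_\alpha\xi$. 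Property (i) follows because $\xi$, and hence $R$, is independent of $B^{H,K}=c_\alpha^{-1}v_\bullet(x)$, hence of $v_\bullet(x)$; properties (ii) and (iii) follow because, by Proposition~\ref{pr:LeiNualart}, $\xi$ has sample paths that are absolutely continuous on $[0\,,\infty)$ and infinitely differentiable on $(0\,,\infty)$, and multiplying by the deterministic constant $c_\alpha$ preserves both regularity properties.

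There is no real obstacle here: the mathematical substance is entirely contained in the covariance computation already displayed, together with the algebraic bookkeeping of the constants $c_\alpha$, $2^{(1-\alpha)/\alpha}$ and $2^{1/(2\alpha)}$. The only points that deserve a moment's attention are (a) verifying that the exponents and powers of $2$ produced by the stable heat-kernel calculation match the $\bfbm$ covariance for the claimed parameters --- in particular that $K=(\alpha-1)/\alpha\in(0\,,1]$, which is exactly Dalang's range --- and (b) noting that Proposition~\ref{pr:LeiNualart} gives independence of $\xi$ from the bifractional Brownian motion itself and not merely from the extracted fractional Brownian motion, which is precisely the form of independence that statement~(i) requires.
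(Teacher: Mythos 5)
Your proposal is correct and follows essentially the same route as the paper: the displayed covariance computation (via \eqref{v}, Parseval, \eqref{eq:chf}, \eqref{eq:L2:p}) identifies $c_\alpha^{-1}v_\bullet(x)$ as a $\bfbm(\nicefrac12\,,(\alpha-1)/\alpha)$, and the decomposition then follows by applying Proposition \ref{pr:LeiNualart} with $HK=(\alpha-1)/(2\alpha)$ and $2^{(1-K)/2}=2^{1/(2\alpha)}$, exactly as the paper does. Your bookkeeping of the constants and of the independence statement (of $\xi$ from the bifractional process itself) matches the intended argument.
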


\begin{remark}\label{rem:LeiNualart}
        From now on, we choose $\alpha$ and $H$ according to the following relation:
        \begin{equation}\label{alpha:H}
                \alpha := \frac{1}{1-2H}\
                \textnormal{ equivalently }\ H := \frac{\alpha-1}{2\alpha},
        \end{equation}
        so that Dalang's condition \eqref{cond:Dalang} is
        equivalent to the restriction that $H\in(0\,,\nicefrac14$].
        Propositions \ref{pr:LeiNualart} and \ref{pr:bifBM:linear}
        together show that $t\mapsto v_t(x)$ is a smooth 
        perturbation of a [non-standard] fractional Brownian motion.
        In particular, we may compare \eqref{kaHdef} and \eqref{A:alpha} in
        order to conclude that
        \begin{equation}
                \ka_H = c_\alpha,
        \end{equation}
        thanks to our convention \eqref{alpha:H}.
        \qed
\end{remark}

\begin{remark}\label{R:smooth_moments}
        According to \eqref{LeiNualart_explicit} the process $R_t$ of 
        Proposition \ref{pr:bifBM:linear} can be written as
        \begin{equation}
        R_t = \text{const}\cdot
        \int_0^\infty \frac{1 - \exp\left(-st\right)}{s^{H+(1/2)}}\,\d W_s.
        \end{equation}
        This is a Gaussian process that is $C^\infty$ away from $t=0$,
        and its derivatives are obtained by differentiating under the 
        [Wiener] integral. In particular, the first derivative of $R$,
        away from $t=0$, is
        \begin{equation}
                R_t' = \text{const}\cdot
                \int_0^\infty \frac{\exp\left(-st\right)}{s^{H-(1/2)}}\,\d W_s
                \qquad(t>0).
        \end{equation}
        Consequently, $\{R_q'\}_{q>0}$
        defines a centered Gaussian process, and Wiener's isometry shows that
        $\E(|R_q'|^2)=\text{const}\cdot q^{2H-2}$ for all $q>0$. Therefore,
        \begin{equation}\begin{split}
                \| R_{t+\ep}-R_t\|_k&= A_k\|R_{t+\ep}-R_t\|_2
                        \le A_k\int_t^{t+\ep} \|R'_q\|_2\,\d q\\
                &=A_k\int_t^{t+\ep} q^{H-1}\,\d q
                        \le A_k \, t^{H-1}\ep,
        \end{split}\end{equation}
        uniformly over all $t>0$ and $\ep\in(0\,,1)$.
        \qed
\end{remark}

\section{The non-linear heat equation}
In this section we consider the non-linear stochastic heat equation
\begin{equation}\label{heat}
        \frac{\pa}{\pa t}u_t(x)  = \frac12(\Delta_{\alpha/2}u_t)(x) + 
        f(c_\alpha u_t(x))\dot{W}_t(x)
\end{equation}
on $(t\,,x)\in(0\,,T]\times\R$,
subject to $u_0(x) \equiv Y_0$ for all $x\in\R$, where
$c_\alpha$ was defined in \eqref{A:alpha} and $f:\R\to\R$
is a globally Lipschitz-continuous function.

As is customary \cite[Chapter 3]{Walsh}, we interpret \eqref{heat} as
the non-linear random evolution equation,
\begin{equation}\label{mild}
        u_t(x) = Y_0 + \int_{(0,t)\times\R} p_{t-s}(y-x)
        f(c_\alpha u_s(y))\, W(\d s\,\d y).
\end{equation}
Dalang's condition \eqref{cond:Dalang} implies that the
evolution equation \eqref{mild} has an a.s.-unique random-field solution $u$.
Moreover, \eqref{cond:Dalang} is necessary and sufficient for the
existence of a random-field solution when $f$ is a constant; 
see \cite{Dalang:99}. 
We will need the following technical estimates.

\begin{lemma}\label{lem:moments}
        For all $k\in[2\,,\infty)$ there exists a finite constant $A_{k,T}$ such that:
        \begin{equation}\label{eq:moments}\begin{split}
                &\E\left(|u_t(x)|^k\right) \le A_{k,T};\qquad\textnormal{and}\\
                &\E\left( \left| u_t(x) - u_{t'}(x') \right|^k\right) \le A_{k,T}
                        \left( \vert x-x'\vert^{(\alpha-1)k/2}+\vert t-t'\vert^{(\alpha-1)k/(2\alpha)}
                        \right);
        \end{split}\end{equation}
        uniformly for all $t,t'\in[0\,,T]$ and $x,x'\in\R$.
\end{lemma}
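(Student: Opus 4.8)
The plan is to establish the two estimates in \eqref{eq:moments} by a standard moment analysis of the mild formulation \eqref{mild}, using the stochastic-heat-equation machinery of Walsh together with the kernel bounds recorded in \eqref{eq:L2:p}. First I would prove the uniform $L^k$-bound $\E(|u_t(x)|^k)\le A_{k,T}$. Applying the Burkholder--Davis--Gundy inequality to the Walsh stochastic integral in \eqref{mild} gives
\begin{equation}
        \|u_t(x)\|_k^2 \le A_k\left( Y_0^2 + \int_0^t \d s\int_{-\infty}^\infty \d y\
        p_{t-s}(y-x)^2\,\|f(c_\alpha u_s(y))\|_k^2\right).
\end{equation}
Since $f$ is globally Lipschitz, $|f(c_\alpha u_s(y))|\le A(1+|u_s(y)|)$, so with $M(t):=\sup_{x\in\R}\|u_t(x)\|_k^2$ one obtains $M(t)\le A_{k}(1+\int_0^t (t-s)^{-1/\alpha}M(s)\,\d s)$ after using $\int_{-\infty}^\infty p_{t-s}(y-x)^2\,\d y=\|p_{t-s}\|_{L^2(\R)}^2=\Gamma(1/\alpha)/(\alpha\pi(t-s)^{1/\alpha})$ from \eqref{eq:L2:p}. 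Because $1/\alpha<1$ (indeed $\alpha>1$ under Dalang's condition \eqref{cond:Dalang}), the kernel $(t-s)^{-1/\alpha}$ is integrable, and a fractional Gr\"onwall lemma (iterating the inequality, or the standard Dalang--Walsh argument) yields $M(t)\le A_{k,T}$ uniformly on $[0\,,T]$; this requires first knowing $M(t)<\infty$, which follows from the a.s.\ existence of the random-field solution asserted after \eqref{mild} together with a truncation/Picard-iteration argument, exactly as in \cite{Walsh,Dalang:99}.

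Next I would treat the space-time increment bound. Write $u_t(x)-u_{t'}(x')$ as a sum of three stochastic integrals obtained by splitting the difference of the two mild representations: a ``time'' piece $\int_{t'}^{t}\int p_{t-s}(y-x)f(c_\alpha u_s(y))\,W(\d s\,\d y)$ (for $t'<t$), and a ``kernel difference'' piece $\int_0^{t'}\int [p_{t-s}(y-x)-p_{t'-s}(y-x)]f(\cdots)\,W$, plus a ``space'' piece $\int_0^{t'}\int p_{t'-s}(y-x)[\,\cdot\,] - p_{t'-s}(y-x')[\,\cdot\,]$, which one can combine with the kernel-difference piece. Applying BDG to each and using the uniform $L^k$ bound just proved to absorb the $f(c_\alpha u_s(y))$ factors, everything reduces to deterministic kernel estimates:
\begin{equation}
        \int_0^{|t-t'|}\|p_r\|_{L^2(\R)}^2\,\d r \le A\,|t-t'|^{(\alpha-1)/\alpha},
        \qquad
        \int_0^{T}\!\!\int_{-\infty}^\infty \left(p_{r}(y)-p_{r}(y-h)\right)^2\d y\,\d r \le A\,|h|^{\alpha-1},
\end{equation}
together with the analogous bound $\int_0^{t'}\int (p_{t-s}(z)-p_{t'-s}(z))^2\,\d z\,\d s\le A|t-t'|^{(\alpha-1)/\alpha}$. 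The first follows immediately from \eqref{eq:L2:p} by integrating $r^{-1/\alpha}$. The spatial one is the routine Fourier computation $\frac1{2\pi}\int\int_0^T |1-\e^{i\xi h}|^2 \e^{-r|\xi|^\alpha}\,\d r\,\d\xi \le A\int |1-\e^{i\xi h}|^2 |\xi|^{-\alpha}\,\d\xi = A|h|^{\alpha-1}\int|1-\e^{iv}|^2|v|^{-\alpha}\,\d v$, the last integral being finite precisely because $1<\alpha<2$. The temporal kernel-difference bound is handled similarly via $\int_0^{t'}|\e^{-(t-s)|\xi|^\alpha/2}-\e^{-(t'-s)|\xi|^\alpha/2}|^2\,\d s$ and a change of variables. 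Raising to the power $k/2$ (valid since $k\ge 2$) and using $\|u_t(x)-u_{t'}(x')\|_k^k=\|(\cdots)\|_k^k$ with the three-term triangle inequality gives \eqref{eq:moments}.

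I expect the only genuine obstacle to be the bookkeeping in the temporal estimate: the ``time'' piece integrates $p_{t-s}$ over the short interval $(t'\,,t)$, which is clean, but the ``kernel-difference'' piece integrates the squared difference $|p_{t-s}-p_{t'-s}|$ over the long interval $(0\,,t')$, and one must be careful that the near-singularity at $s\uparrow t'$ (where both kernels blow up) does not spoil the $|t-t'|^{(\alpha-1)/\alpha}$ rate; this is resolved by the Fourier-side computation above, where the exponent comes out by scaling rather than by pointwise kernel bounds. Everything else—BDG, the Lipschitz bound on $f$, the self-improving Gr\"onwall step—is standard for this class of equations under Dalang's condition \eqref{cond:Dalang}, and I would cite \cite{Walsh} and \cite{Dalang:99} for the template rather than reproduce it in full.
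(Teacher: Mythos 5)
Your proposal is correct, and it is essentially the argument the paper itself points to: the paper does not prove Lemma \ref{lem:moments} but cites Dalang \cite{Dalang:99} for the uniform moment bound and Foondun--Khoshnevisan \cite{FA:whitenoise} for the increments, remarking that the latter ``can also be shown to follow from the moment estimates of \cite{Dalang:99} and some harmonic analysis,'' which is precisely your BDG--Gr\"onwall step plus the Plancherel/scaling kernel estimates. The only nitpick is cosmetic: finiteness of $\int|1-\e^{iv}|^2|v|^{-\alpha}\,\d v$ holds for the whole range $1<\alpha\le 2$ (indeed for $1<\alpha<3$), not just $1<\alpha<2$.
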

This is well known: The first moment bound can be found explicitly in Dalang \cite{Dalang:99},
and the second can be found in the appendix of Foondun and Khoshnevisan
\cite{FA:whitenoise}. The second can also be shown to follow from the 
moments estimates of  \cite{Dalang:99} and some harmonic analysis. 

Lemma \ref{lem:moments} and the Kolmogorov continuity theorem 
\cite[Theorem 4.3, p.\ 10]{Minicourse}
together imply that $u$ is continuous up to a modification.
Moreover, \eqref{alpha:H}
and Kolmogorov's continuity theorem imply that for every $x\in\R$,
\begin{equation}\label{eq:u:in:C}
        u_\bullet(x)\in\bigcap_{\gamma\in(0,H)} C^\gamma([0\,,T]).
\end{equation}

\section{An approximation theorem}

The following is the main technical contribution of this paper. 
It bears the same spirit as the argument made by Hairer et al. for the rough Burgers-like equations \cite{HairerWeber}.
Recall that $v$ denotes the solution to the linear stochastic heat equation
\eqref{SHE-linear}, and has the integral representation \eqref{v}.

\begin{theorem}\label{th:main}
        For every $k\in[2\,,\infty)$
        there exists a finite constant $A_{k,T}$ such that 
        uniformly for all $\varepsilon\in(0\,,1)$, $x\in\R$, and $t\in[0\,,T]$,
        \begin{equation} \label{Eq:main1}
                \E\left( \left| u_{t+\varepsilon}(x) - u_t(x) - f(c_\alpha u_t(x))\cdot
                \left\{ v_{t+\varepsilon}(x) -v_t(x) \right\}\right|^k\right)
                \le A_{k,T}\,\varepsilon^{\mathcal{G}_H k},
        \end{equation}
        where 
        \begin{equation}\label{G}
                \mathcal{G}_H:= \frac{2H}{1+H}.
        \end{equation}
\end{theorem}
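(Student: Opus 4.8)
The plan is to compare $u$ with the ``frozen-coefficient'' increment by writing the mild formulation \eqref{mild} for the difference $u_{t+\ep}(x)-u_t(x)$ and subtracting $f(c_\alpha u_t(x))$ times the corresponding representation \eqref{v} for $v_{t+\ep}(x)-v_t(x)$. Using \eqref{v} and \eqref{mild}, the quantity inside the expectation in \eqref{Eq:main1} becomes a sum of stochastic integrals against the white noise $W(\d s\,\d y)$. First I would split the time domain of integration into the ``old'' region $s\in(0,t)$, where both $p_{t+\ep-s}$ and $p_{t-s}$ appear and one gets the kernel difference $p_{t+\ep-s}(y-x)-p_{t-s}(y-x)$, and the ``new'' region $s\in(t,t+\ep)$, where only $p_{t+\ep-s}(y-x)$ appears. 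In each region the integrand is of the form $[\text{kernel}]\cdot\bigl(f(c_\alpha u_s(y))-f(c_\alpha u_t(x))\bigr)$. Then I would apply the Burkholder–Davis–Gundy inequality (in the form used throughout the SPDE literature, e.g.\ \cite{Walsh,Minicourse}) to bound the $k$-th moment of each stochastic integral by the $L^{k/2}(\Omega)$ norm of the corresponding quadratic variation $\int\int [\text{kernel}]^2\,\|f(c_\alpha u_s(y))-f(c_\alpha u_t(x))\|_{?}^2\,\d s\,\d y$-type expression, and use Minkowski's integral inequality to pull the $L^k(\Omega)$ norm inside.

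The Lipschitz bound on $f$ reduces the factor $\|f(c_\alpha u_s(y))-f(c_\alpha u_t(x))\|_k$ to $\text{const}\cdot\|u_s(y)-u_t(x)\|_k$, which by Lemma \ref{lem:moments} is at most $A_{k,T}\bigl(|y-x|^{(\alpha-1)/2}+|s-t|^{(\alpha-1)/(2\alpha)}\bigr)$. So the whole estimate comes down to bounding two deterministic space-time integrals: for the new region,
\begin{equation}
        \int_t^{t+\ep}\d s\int_\R\d y\ p_{t+\ep-s}(y-x)^2\bigl(|y-x|^{(\alpha-1)/2}+(s-t)^{(\alpha-1)/(2\alpha)}\bigr)^2,
\end{equation}
and for the old region, the analogue with $p_{t+\ep-s}(y-x)-p_{t-s}(y-x)$ in place of $p_{t+\ep-s}(y-x)$ and with $s$ running over $(0,t)$. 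Here I would use the scaling $p_r(z)=r^{-1/\alpha}p_1(r^{-1/\alpha}z)$, the $L^2$ identity \eqref{eq:L2:p}, and standard bounds on $\int_\R |z|^\beta p_r(z)^2\,\d z \asymp r^{(\beta-1)/\alpha}$ for $\beta\in[0,\alpha-1)$, together with the Fourier-side estimate $|\wh p_{t+\ep-s}(\xi)-\wh p_{t-s}(\xi)|\le \min\{1,\,\tfrac12\ep|\xi|^\alpha\}\,\wh p_{t-s}(\xi)$ for the kernel difference in the old region. Carrying out these one-dimensional integrals and optimizing should produce $\ep$ raised to an exponent; substituting $\alpha=1/(1-2H)$ from \eqref{alpha:H} and simplifying should yield exactly $2\mathcal{G}_H = 4H/(1+H)$ on the quadratic-variation scale, hence $\ep^{\mathcal{G}_H k}$ after taking the $k$-th root, i.e.\ the $k/2$ power of the $L^{k/2}$ norm.

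The main obstacle I anticipate is the old region: there, the Hölder-in-time estimate $|s-t|^{(\alpha-1)/(2\alpha)}$ from Lemma \ref{lem:moments} is \emph{not} small (it is $O(1)$ when $s$ is far from $t$), so one cannot simply bound $\|u_s(y)-u_t(x)\|_k$ crudely; one must exploit the smallness of the kernel difference $p_{t+\ep-s}(y-x)-p_{t-s}(y-x)$ and balance the two contributions $|y-x|^{(\alpha-1)/2}$ and $|s-t|^{(\alpha-1)/(2\alpha)}$ separately, splitting $s\in(0,t)$ further into $s\in(t-\ep,t)$ and $s\in(0,t-\ep)$ and treating the latter via the Fourier bound $\ep|\xi|^\alpha\wedge 1$. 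Getting the exponent on the nose — rather than just \emph{some} positive power of $\ep$ — is where the arithmetic with $\alpha$ and $H$ has to be done carefully, and where the specific value $\mathcal{G}_H=2H/(1+H)$ (which is the Hölder exponent that is actually used in the sequel to interpret the ratio in (DE) as ``little-$o$ in probability'') emerges from the competition between the spatial and temporal regularity scales.
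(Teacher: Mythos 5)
Your treatment of the region $s\in(t,t+\varepsilon)$ is fine and essentially reproduces Proposition \ref{pr:J2} (Lemmas \ref{lem:J2-J2'} and \ref{lem:J2'-J2tilde} merged into one step), since there $u_t(x)$ is $\mathscr{F}_s$-measurable and the Walsh/BDG machinery applies. The genuine gap is in the old region $s\in(0,t)$: you propose to fold the frozen factor into the noise integral and apply \eqref{BDG} to the integrand $\left[p_{t+\varepsilon-s}(y-x)-p_{t-s}(y-x)\right]\left\{f(c_\alpha u_s(y))-f(c_\alpha u_t(x))\right\}$, but for $s<t$ the random variable $u_t(x)$ is \emph{not} $\mathscr{F}_s$-measurable, so this is not a Walsh integral with a predictable integrand and the BDG bound \eqref{BDG} does not apply to it. The term $f(c_\alpha u_t(x))\{v_{t+\varepsilon}(x)-v_t(x)\}$ has to be treated as a product of random variables, not absorbed into the stochastic integral over the past. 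This obstruction is exactly what dictates the shape of the paper's proof of Proposition \ref{pr:J1}: the past is cut at $t-\varepsilon^a$; on $(0,t-\varepsilon^a)$ \emph{no cancellation is used at all}---$\mathscr{J}_{1,a}$ is bounded by BDG with the adapted integrand $f(c_\alpha u_s(y))$ (Lemma \ref{lem:J1a}), and the frozen counterpart $f(c_\alpha u_t(x))\,\Lambda([0,t-\varepsilon^a])$ is bounded by Cauchy--Schwarz together with moment bounds for the Wiener integral $\Lambda$---while on the short window $(t-\varepsilon^a,t)$ the coefficient is frozen at time $t$, at a cost of order $\varepsilon^{2aH}$ per $L^k$-norm (Lemmas \ref{lem:J1a'-J1a''} and \ref{lem:J1a''-J1atilde}).

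This also shows that your account of where $\mathcal{G}_H$ comes from cannot be right. Giving up the cancellation, the whole-past term is genuinely of size $\varepsilon^H$ (it is essentially $f$ times a past-measurable fBm-type increment), and the far-past bound of Lemma \ref{lem:J1a} is only $\varepsilon^{1-a(1-H)}$, which degenerates to $\varepsilon^H$ as $a\uparrow 1$; the exponent $\mathcal{G}_H=2H/(1+H)$ arises from balancing $1-a(1-H)$ against the freezing cost $2aH$, i.e.\ from the optimal choice $a=1/(1+H)$ in \eqref{a}---not from a competition between the spatial exponent $(\alpha-1)/2$ and the temporal exponent $(\alpha-1)/(2\alpha)$ in your deterministic integrals. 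Indeed, if one carries out the integrals you wrote down (pretending the BDG step were legitimate), both the spatial and the temporal contributions of the old region come out of order $\varepsilon^{4H}$ on the quadratic-variation scale, so your sketch would ``prove'' the strictly stronger exponent $2H>\mathcal{G}_H$; that it overshoots the theorem is a symptom of the unjustified step, not a bonus. To repair a direct argument along your lines you would need an anticipating (Skorokhod/Malliavin) calculus with its correction terms, which is a genuinely different and harder route than the localization-and-optimization argument the paper uses.
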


\begin{remark}\label{rem:GH>H}
        Since $0<H\le\nicefrac14$, it follows that 
        \begin{equation}\label{eq:G/H}
                \frac{8}{5} \le \frac{\mathcal{G}_H}{H} <2.
        \end{equation}
        We do not know whether the fraction $\nicefrac{8}{5}=1.6$
        is a meaningful quantity or a byproduct of the particulars of our method.
        For us the relevant matter is that \eqref{eq:G/H} is a good enough estimate to ensure
        that $\mathcal{G}_H/H>1$; the strict inequality will play an important role
        in the sequel.\qed
\end{remark}

Theorem \ref{th:main} is in essence an analysis of the temporal
increments of $u_\bullet(x)$. Thanks to \eqref{mild},
we can write those increments as
\begin{equation}\label{eq:u-u}
        u_{t+\varepsilon}(x) - u_t(x) := \mathscr{J}_1 + \mathscr{J}_2,
\end{equation}
where
\begin{equation}\begin{split}
        \mathscr{J}_1 & := \int_{(0,t)\times\R} \left[ p_{t+\varepsilon-s}(y-x)-p_{t-s}(y-x)\right]
                f(c_\alpha u_s(y))\, W(\d s\,\d y);\\
        \mathscr{J}_2 &:= \int_{(t,t+\varepsilon)\times\R} p_{t+\varepsilon-s}(y-x)
                f(c_\alpha u_s(y))\, W(\d s\,\d y).
                \label{eq:J1J2}
\end{split}\end{equation}

Our proof of Theorem \ref{th:main} proceeds by analyzing $\mathscr{J}_1$
and $\mathscr{J}_2$ separately. Let us begin with the latter quantity,
as it is easier to estimate than the former term.

\subsection{Estimation of $\mathscr{J}_2$}
Define
\begin{equation}\label{eq:J2tilde}
        \widetilde{\mathscr{J}}_2 := f(c_\alpha u_t(x))\cdot\int_{(t,t+\varepsilon)\times\R} 
        p_{t+\varepsilon-s}(y-x)\, W(\d s\,\d y).
\end{equation}

\begin{proposition}\label{pr:J2}
        For every $k\in[2\,,\infty)$ there exists a finite constant
        $A_{k,T}$ such that for all $\varepsilon\in(0\,,1)$,
        \begin{equation}
                \sup_{x\in\R}\sup_{t\in[0,T]}
                \E\left(\left| \mathscr{J}_2 - \widetilde{\mathscr{J}}_2\right|^k\right)
                \le A_{k,T}\,\varepsilon^{2Hk}.
        \end{equation}
\end{proposition}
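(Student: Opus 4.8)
The plan is to control the difference $\mathscr{J}_2 - \widetilde{\mathscr{J}}_2$ by writing it as a single stochastic integral over the strip $(t\,,t+\varepsilon)\times\R$ with integrand $p_{t+\varepsilon-s}(y-x)\big[f(c_\alpha u_s(y)) - f(c_\alpha u_t(x))\big]$, and then applying the Burkholder--Davis--Gundy inequality in the form adapted to Walsh-type stochastic integrals. Concretely, for $k\ge 2$,
\begin{equation*}
        \big\| \mathscr{J}_2 - \widetilde{\mathscr{J}}_2 \big\|_k^2
        \le A_k \int_t^{t+\varepsilon}\d s\int_{-\infty}^\infty \d y\
        p_{t+\varepsilon-s}(y-x)^2\,\big\| f(c_\alpha u_s(y)) - f(c_\alpha u_t(x)) \big\|_k^2.
\end{equation*}
Since $f$ is globally Lipschitz, the last factor is at most $\text{const}\cdot \|u_s(y) - u_t(x)\|_k^2$, and Lemma \ref{lem:moments} bounds this by $A_{k,T}\big(|y-x|^{\alpha-1} + |s-t|^{(\alpha-1)/\alpha}\big)$. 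So the whole quantity is controlled by two integrals: one with the spatial modulus $|y-x|^{\alpha-1}$ against $p_{t+\varepsilon-s}(y-x)^2\,\d y$, and one with the temporal modulus $(s-t)^{(\alpha-1)/\alpha}$, which just pulls out of the $y$-integral.

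The spatial piece is the one requiring a little care. First I would note $\int_{-\infty}^\infty p_r(z)^2\,\d z = \|p_r\|_{L^2(\R)}^2 = \Gamma(1/\alpha)/(\alpha\pi r^{1/\alpha})$ by \eqref{eq:L2:p}. For the weighted version, a scaling argument works: since $p_r(z) = r^{-1/\alpha}p_1(r^{-1/\alpha}z)$, the substitution $z = r^{1/\alpha}w$ gives $\int |z|^{\alpha-1}p_r(z)^2\,\d z = r^{(\alpha-1)/\alpha}\,r^{-1/\alpha}\int |w|^{\alpha-1}p_1(w)^2\,\d w$, and the remaining integral $\int |w|^{\alpha-1}p_1(w)^2\,\d w$ is a finite absolute constant because $p_1$ is bounded with the polynomial tail $p_1(w)\asymp |w|^{-1-\alpha}$ of a symmetric stable density, so $|w|^{\alpha-1}p_1(w)^2 \asymp |w|^{-3-\alpha}$ is integrable at infinity (and $|w|^{\alpha-1}$ is harmless near $0$). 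Hence with $r = t+\varepsilon-s \in (0\,,\varepsilon)$,
\begin{equation*}
        \int_{-\infty}^\infty p_{t+\varepsilon-s}(y-x)^2\big(|y-x|^{\alpha-1} + (s-t)^{(\alpha-1)/\alpha}\big)\,\d y
        \le A\Big((t+\varepsilon-s)^{(\alpha-2)/\alpha} + (s-t)^{(\alpha-1)/\alpha}(t+\varepsilon-s)^{-1/\alpha}\Big).
\end{equation*}
Integrating in $s$ over $(t\,,t+\varepsilon)$: the first term contributes $\int_0^\varepsilon r^{(\alpha-2)/\alpha}\,\d r \asymp \varepsilon^{(2\alpha-2)/\alpha} = \varepsilon^{2(\alpha-1)/\alpha}$, using that $(\alpha-2)/\alpha > -1$ (i.e.\ $\alpha>1$, Dalang's condition); the second term contributes at most $\varepsilon^{(\alpha-1)/\alpha}\int_0^\varepsilon r^{-1/\alpha}\,\d r \asymp \varepsilon^{(\alpha-1)/\alpha}\cdot\varepsilon^{(\alpha-1)/\alpha} = \varepsilon^{2(\alpha-1)/\alpha}$, again using $-1/\alpha>-1$. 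Recalling from \eqref{alpha:H} that $(\alpha-1)/(2\alpha) = H$, we get $\varepsilon^{2(\alpha-1)/\alpha} = \varepsilon^{4H}$, so $\|\mathscr{J}_2 - \widetilde{\mathscr{J}}_2\|_k^2 \le A_{k,T}\,\varepsilon^{4H}$, which is exactly the claimed $\varepsilon^{2Hk}$ after raising to the $k/2$ power.

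The only genuine obstacle is justifying the weighted $L^2$ bound $\int |z|^{\alpha-1}p_r(z)^2\,\d z \le A\,r^{(\alpha-2)/\alpha}$ cleanly; this rests on the standard two-sided heat-kernel estimate $p_r(z) \asymp r^{-1/\alpha}\wedge \big(r\,|z|^{-1-\alpha}\big)$ for symmetric $\alpha$-stable densities (see Jacob \cite{Jacob}), and once that is invoked the integrals are elementary. Everything else is BDG plus the a priori moment bounds of Lemma \ref{lem:moments}, with the exponent bookkeeping turning on Dalang's condition $\alpha\in(1\,,2]$ to keep the temporal integrals convergent. Note the argument does not even need the full strength of $H\le\nicefrac14$; it works for all $H\in(0\,,\nicefrac12)$, consistent with the remark following Theorem \ref{th:SDE}.
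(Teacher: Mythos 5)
Your argument is correct, and it uses the same ingredients as the paper's proof---the BDG inequality \eqref{BDG}, the Lipschitz property of $f$, the moment bounds of Lemma \ref{lem:moments}, and the scaling/tail estimates for the stable kernel---but organized differently: the paper interposes the process $\mathscr{J}_2'$ (with $u_s(x)$ in place of $u_s(y)$) and proves two separate lemmas (Lemmas \ref{lem:J2-J2'} and \ref{lem:J2'-J2tilde}), whereas you absorb both the spatial and temporal replacements into a single BDG application using the combined modulus $\|u_s(y)-u_t(x)\|_k^2\le A_{k,T}(|y-x|^{\alpha-1}+|s-t|^{(\alpha-1)/\alpha})$, and you handle the weighted kernel integral by the clean scaling identity $\int|z|^{\alpha-1}p_r(z)^2\,\d z=r^{(\alpha-2)/\alpha}\int|w|^{\alpha-1}p_1(w)^2\,\d w$ rather than the paper's piecewise bound on $p_s(y)$. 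Both routes give $\|\mathscr{J}_2-\widetilde{\mathscr{J}}_2\|_k^2\le A_{k,T}\,\varepsilon^{2(\alpha-1)/\alpha}=A_{k,T}\,\varepsilon^{4H}$; yours is slightly shorter, the paper's is slightly more modular. Two small caveats: at $\alpha=2$ the two-sided estimate $p_r(z)\asymp r^{-1/\alpha}\wedge(r|z|^{-1-\alpha})$ fails (the Gaussian tail is lighter), but only the upper bound $p_1(z)\le\mathrm{const}\cdot(1+|z|)^{-1-\alpha}$ is needed, which is exactly what the paper cites from \cite{Kh}; and your closing remark that the argument covers all $H\in(0\,,\nicefrac12)$ is not right as stated, since within the white-noise framework of \eqref{heat} the relation \eqref{alpha:H} forces $\alpha>2$ when $H>\nicefrac14$, where there is no stable transition density and Dalang's condition is the binding constraint---the extension to $H\in(0\,,\nicefrac12)$ mentioned after Theorem \ref{th:SDE} instead requires replacing $\dot W$ by spatially colored noise.
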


We split the proof in 2 parts: First we show
that $\mathscr{J}_2\approx\mathscr{J}_2'$ in $L^k(\Omega)$, where
\begin{equation}
        \mathscr{J}_2' := \int_{(t,t+\varepsilon)\times\R} p_{t+\varepsilon-s}(y-x)
        f(c_\alpha u_s(x))\,W(\d s\,\d y).
\end{equation}
After that we will verify that $\mathscr{J}_2'\approx\widetilde{\mathscr{J}}_2$ in
$L^k(\Omega)$. 
Proposition \ref{pr:J2} follows immediately from Lemmas \ref{lem:J2-J2'}
and \ref{lem:J2'-J2tilde} below and Minkowski's inequality. Therefore,
we will state and prove only those two lemmas.

\begin{lemma}\label{lem:J2-J2'}
        For all $k\in[2\,,\infty)$ there exists a finite constant $A_{k,T}$ such that
        uniformly for all $\varepsilon\in(0\,,1)$,
        \begin{equation}
                \sup_{x\in\R}\sup_{t\in[0,T]}
                \E\left( \left| \mathscr{J}_2 - \mathscr{J}_2'\right|^k\right) \le A_{k,T}
                \varepsilon^{2Hk}.
        \end{equation}
\end{lemma}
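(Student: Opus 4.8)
The plan is to estimate the difference $\mathscr{J}_2 - \mathscr{J}_2'$ directly via the Burkholder--Davis--Gundy inequality for stochastic integrals against space-time white noise. Since both integrals are over the same region $(t,t+\varepsilon)\times\R$ with the same kernel $p_{t+\varepsilon-s}(y-x)$, their difference is
\[
        \mathscr{J}_2 - \mathscr{J}_2' = \int_{(t,t+\varepsilon)\times\R} p_{t+\varepsilon-s}(y-x)
        \bigl[ f(c_\alpha u_s(y)) - f(c_\alpha u_s(x))\bigr]\, W(\d s\,\d y),
\]
so BDG gives, for $k\in[2\,,\infty)$,
\[
        \bigl\| \mathscr{J}_2 - \mathscr{J}_2'\bigr\|_k^2
        \le A_k \int_t^{t+\varepsilon}\d s \int_{-\infty}^\infty \d y\
        p_{t+\varepsilon-s}(y-x)^2\, \bigl\| f(c_\alpha u_s(y)) - f(c_\alpha u_s(x))\bigr\|_k^2 .
\]

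Next I would bound the last factor. Using the global Lipschitz property of $f$ we have $\|f(c_\alpha u_s(y)) - f(c_\alpha u_s(x))\|_k \le c_\alpha\,\lip_f\,\|u_s(y)-u_s(x)\|_k$, and then Lemma~\ref{lem:moments} yields $\|u_s(y)-u_s(x)\|_k \le A_{k,T}\,|x-y|^{(\alpha-1)/2}$. Substituting and recalling that $(\alpha-1)/(2\alpha) = H$, i.e.\ $(\alpha-1)/\alpha = 2H$, so that $(\alpha-1) = 2H\alpha = 2H/(1-2H)$, gives
\[
        \bigl\| \mathscr{J}_2 - \mathscr{J}_2'\bigr\|_k^2
        \le A_{k,T} \int_t^{t+\varepsilon}\d s \int_{-\infty}^\infty \d y\
        p_{t+\varepsilon-s}(y-x)^2\, |x-y|^{\alpha-1}.
\]
By translation invariance the inner integral is $\int_\R p_r(z)^2 |z|^{\alpha-1}\,\d z$ with $r = t+\varepsilon - s$. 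A scaling argument is the heart of the matter: the stable-$\alpha$ heat kernel satisfies $p_r(z) = r^{-1/\alpha} p_1(r^{-1/\alpha} z)$, hence $\int_\R p_r(z)^2 |z|^{\alpha-1}\,\d z = r^{-2/\alpha} \cdot r^{1/\alpha} \cdot r^{(\alpha-1)/\alpha}\int_\R p_1(w)^2|w|^{\alpha-1}\,\d w = r^{-2/\alpha + 1/\alpha + (\alpha-1)/\alpha}\cdot C_\alpha = C_\alpha$, since $-2/\alpha + 1/\alpha + (\alpha-1)/\alpha = 0$; the constant $C_\alpha = \int_\R p_1(w)^2 |w|^{\alpha-1}\,\d w$ is finite because $p_1$ is bounded and decays like $|w|^{-1-\alpha}$ at infinity (so the integrand is $O(|w|^{\alpha-1})$ near $0$, integrable since $\alpha>1$, and $O(|w|^{-3-\alpha})$ at infinity). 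Therefore the $\d y$-integral contributes a constant independent of $r$, and integrating over $s\in(t\,,t+\varepsilon)$ gives a factor $\varepsilon$:
\[
        \bigl\| \mathscr{J}_2 - \mathscr{J}_2'\bigr\|_k^2 \le A_{k,T}\,\varepsilon .
\]
This only yields $\varepsilon^{1/2}$ for the $L^k$ norm, which is weaker than the claimed $\varepsilon^{2H}$ only if $2H > 1/2$; since $H\le \nicefrac14$ we have $2H\le\nicefrac12$, so in fact $\varepsilon^{1/2}\le \varepsilon^{2H}$ on $(0,1)$ and the bound $\|\mathscr{J}_2-\mathscr{J}_2'\|_k \le A_{k,T}\,\varepsilon^{2H}$ follows, i.e.\ $\E(|\mathscr{J}_2-\mathscr{J}_2'|^k) \le A_{k,T}\,\varepsilon^{2Hk}$.

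The main obstacle I anticipate is making the scaling/integrability bookkeeping for $C_\alpha = \int_\R p_1(w)^2|w|^{\alpha-1}\,\d w$ fully rigorous, in particular confirming the tail decay $p_1(w) \asymp |w|^{-1-\alpha}$ for symmetric $\alpha$-stable densities with $\alpha\in(1,2]$ (standard, but needs a citation, e.g.\ to Jacob \cite{Jacob} or a stable-process reference) and checking that the exponent arithmetic $-2/\alpha + 1/\alpha + (\alpha-1)/\alpha = 0$ is exactly what makes the inner integral scale-free. A secondary point is that one should double-check whether the more direct route — bounding $p_{t+\varepsilon-s}(y-x)^2 \le p_{2(t+\varepsilon-s)}(0)\cdot p_{t+\varepsilon-s}(y-x)$ and using $\int_\R p_r(z)|z|^{\alpha-1}\,\d z \asymp r^{(\alpha-1)/\alpha} = r^{2H}$ together with $p_{2r}(0)\asymp r^{-1/\alpha}$ — gives the cleaner power count $\int_0^\varepsilon r^{-1/\alpha}\cdot r^{2H}\,\d r$; since $2H - 1/\alpha = 2H - (1-2H) = 4H-1 \le 0$, this integral is $O(\varepsilon^{4H})$ when $4H<1$ and $O(\varepsilon\log(1/\varepsilon))$ at $H=\nicefrac14$, either of which also dominates $\varepsilon^{2H}$ after taking square roots since $2H\le\nicefrac12$ forces $\min(2H, \tfrac12) = 2H$. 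Either way the conclusion is robust; I would present the first (scale-invariance) computation as the cleanest, and remark that sharper exponents are available but unnecessary.
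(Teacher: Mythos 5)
Your overall strategy---BDG, the Lipschitz property of $f$, the equal-time spatial increment bound from Lemma \ref{lem:moments}, and scaling of the stable kernel---is the same as the paper's, but the headline scaling computation contains an arithmetic error that invalidates the stated intermediate bound when $H<\nicefrac14$. The exponent sum is $-\tfrac2\alpha+\tfrac1\alpha+\tfrac{\alpha-1}{\alpha}=\tfrac{\alpha-2}{\alpha}=4H-1$, not $0$; it vanishes only at $\alpha=2$, i.e.\ $H=\nicefrac14$. Hence $\int_\R p_r(z)^2|z|^{\alpha-1}\,\d z=C_\alpha\,r^{4H-1}$, which is \emph{not} scale-free and blows up as $r\downarrow0$ for $H<\nicefrac14$; consequently the claim that the inner integral is a constant, the resulting bound $\|\mathscr{J}_2-\mathscr{J}_2'\|_k^2\le A_{k,T}\,\varepsilon$, and the fallback step ``$\varepsilon^{1/2}\le\varepsilon^{2H}$'' are all unsupported as written. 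The repair is immediate and gives exactly the claimed rate with no slack: integrating the correct expression over $s\in(t,t+\varepsilon)$, i.e.\ $\int_0^\varepsilon r^{4H-1}\,\d r=\varepsilon^{4H}/(4H)$ (finite since $4H>0$), yields $\|\mathscr{J}_2-\mathscr{J}_2'\|_k\le A_{k,T}\,\varepsilon^{2H}$, which is the lemma. In fact your own ``secondary point'' ($p_r(y)^2\le p_{2r}(0)\,p_r(y)$, $\int_\R p_r(z)|z|^{\alpha-1}\,\d z\asymp r^{2H}$, $p_{2r}(0)\asymp r^{-1/\alpha}$, hence $\int_0^\varepsilon r^{4H-1}\,\d r=O(\varepsilon^{4H})$) is the correct power count---note that at $H=\nicefrac14$ the integral is simply $O(\varepsilon)=O(\varepsilon^{4H})$, no logarithm arises---so that computation, or the corrected scaling identity, should be promoted to the main argument rather than left as an aside.

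For comparison, the paper arrives at the same bound $\varepsilon^{2(\alpha-1)/\alpha}=\varepsilon^{4H}$ for the squared $L^k$ norm by a slightly different bookkeeping: it keeps the truncation $\|u_s(y)-u_s(x)\|_k^2\le A_{k,T}\,(|y-x|^{\alpha-1}\wedge1)$ (using both estimates of Lemma \ref{lem:moments}) and splits the spatial integral at $|y|=1$, using the pointwise bounds $p_s(y)\le\mathrm{const}\cdot s^{-1/\alpha}$ for $|y|\le s^{1/\alpha}$ and $p_s(y)\le\mathrm{const}\cdot s|y|^{-1-\alpha}$ otherwise. Your scaling route, once the exponent is fixed, is an equally clean equivalent; dropping the $\wedge1$ is harmless because $\int_\R p_1(w)^2|w|^{\alpha-1}\,\d w<\infty$, thanks to the tail estimate $p_1(w)\le\mathrm{const}\cdot(1+|w|)^{-1-\alpha}$ already cited in the paper \cite[Proposition 3.3.1, p.\ 380]{Kh}.
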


\begin{proof}
        The proof will use a 
        particular form of the Burkholder--Davis--Gundy (BDG)
        inequality \cite[Lemma 2.3]{ConusKh}. Since we will make
        repeated use of this inequality throughout, let us recall it first.
        
        For every $t\ge 0$, let  $\mathscr{F}_t^0$
        denote the sigma-algebra generated by 
        every Wiener integral of the form
        $\int_{(0,t)\times\R}\varphi_s(y)\, W(\d s\,\d y)$
        as $\varphi$ ranges over all elements of $L^2(\R_+\times\R)$. 
        We complete every such sigma-algebra, and make the
        filtration $\{\mathscr{F}_t\}_{t\ge0}$ right continuous
        in order to obtain the ``Brownian filtration'' $\mathscr{F}$
        that corresponds to the white noise $\dot{W}$.
        
        Let $\Phi:=\{\Phi_t(x)\}_{t\ge 0,x\in\R}$
        be a predictable random field with respect to $\mathscr{F}$.
        Then, for every real number $k\in[2\,,\infty)$, we have the
        following BDG inequality:
        \begin{equation}\label{BDG}
                \left\| \int_{(0,t)\times\R} \Phi_s(y)\, W(\d s\,\d y)\right\|_k^2
                \le 4k\int_0^t \d s\int_{-\infty}^\infty\d y\
                \|\Phi_s(y)\|_k^2.
        \end{equation}
        
        The BDG inequality \eqref{BDG}
        and eq.\ \eqref{mild} together imply that
        \begin{align}\notag
                &\left\| \mathscr{J}_2-\mathscr{J}_2'\right\|_{L^k(\Omega)}^2\\\notag
                &\qquad\le  4k\int_t^{t+\varepsilon}
                        \d s\int_{-\infty}^\infty\d y\ \left[ p_{t+\varepsilon-s}(y-x)\right]^2
                        \left\| f(c_\alpha u_s(y)) -f(c_\alpha u_s(x))\right\|_k^2\\
                &\qquad\le 4kc_\alpha^2\lip^2_f\cdot\int_t^{t+\varepsilon}\d s
                        \int_{-\infty}^\infty\d y\
                        \left[ p_{t+\varepsilon-s}(y-x)\right]^2\left\| u_s(y) - u_s(x)\right\|_k^2
                        \label{eq:JJ2}\\\notag
                &\qquad\le A_{k,T}\int_0^\varepsilon\d s\int_{-\infty}^\infty\d y\
                        \left[ p_s(y)\right]^2\left( |y|^{\alpha-1}\wedge 1\right).
        \end{align}
        The last inequality uses both moment inequalities of Lemmas \ref{lem:moments}.
        Furthermore, measurability issues do not arise, since the solution to
        \eqref{mild} is continuous in the time variable $t$ and adapted to the
        Brownian filtration $\mathscr{F}$.
        
        In order to proceed from here,
        we need to recall two basic facts about the transition functions of stable processes:
        First of all, 
        \begin{equation}\label{scaling}
                p_s(y) = s^{-1/\alpha} p_1\left( |y|/s^{1/\alpha} \right)
                \qquad\textnormal{for all $s>0$ and $y\in\R$}.
        \end{equation}
        This fact is a consequence of scaling and symmetry; see \eqref{eq:chf}. 
        We also need to know the  fact that
        $p_1(z) \le \textnormal{const}\cdot (1+|z|)^{-(1+\alpha)}$ for all $z\in\R$
        \cite[Proposition 3.3.1, p.\ 380]{Kh}, whence
        \begin{equation}
                p_s(y) \le \textnormal{const}\times\begin{cases}
                        s^{-1/\alpha}&\textnormal{if $|y|\le s^{1/\alpha}$},\\
                        s|y|^{-(1+\alpha)}&\textnormal{if $|y|>s^{1/\alpha}$}.
                \end{cases}
        \end{equation}
        Consequently,
        \begin{equation}\begin{split}
                &\int_0^\varepsilon\d s\int_0^1\d y\ \left[ p_s(y)\right]^2
                        \left( y^{\alpha-1}\wedge1\right)\\
                &\hskip.27in\le \text{const}\cdot\left({\int_0^\varepsilon s^{-2/\alpha}\,\d s\int_0^{s^{1/\alpha}}
                        y^{\al-1}\,\d y+
                        \int_0^\varepsilon s^2\,\d s\int_{s^{1/\alpha}}^1 y^{-3-\alpha}\,\d y}\right)\\
                &\hskip.27in\le \text{const}\cdot \varepsilon^{2(\alpha-1)/\alpha}.
        \end{split}\end{equation}
        We obtain the following estimate by similar means:
        \begin{align}
                \int_0^\varepsilon\d s\int_1^\infty\d y\ \left[ p_s(y)\right]^2
                        \left( y^{\alpha-1}\wedge1\right)
                &\le\textnormal{const}\cdot\int_0^\varepsilon
                        s^2\, \d s\int_1^\infty y^{-2-2\alpha}\,\d y\\\notag
                &= \textnormal{const}\cdot \varepsilon^3\\\notag
                &\le\text{const}\cdot\varepsilon^{2(\al-1)/\alpha},
        \end{align}
        uniformly for all $\varepsilon\in(0\,,1)$.
        Since $p_s(y)=p_s(-y)$ for all $s>0$ and $y\in\R$,
        the preceding two displays and  \eqref{eq:JJ2}
        together imply that
        \begin{equation}
                \| \mathscr{J}_2-\mathscr{J}_2'\|_{L^k(\Omega)}^2\le
                \text{const}\cdot \varepsilon^{2(\al-1)/\al}.
        \end{equation}
        We may conclude the lemma from this inequality,
        using our convention about $\alpha$ and 
        $H$; see \eqref{alpha:H}.
\end{proof}

In light of Lemma \ref{lem:J2-J2'}, Proposition \ref{pr:J2} follows
at once from

\begin{lemma}\label{lem:J2'-J2tilde}
        For all $k\in[2\,,\infty)$ there exists a finite constant $A_{k,T}$ such that
        uniformly for all $\varepsilon\in(0\,,1)$,
        \begin{equation}
                \sup_{x\in\R}\sup_{t\in[0,T]}
                \E\left( \left| \mathscr{J}_2' - \widetilde{\mathscr{J}}_2\right|^k\right) \le A_{k,T}
                \,\varepsilon^{2Hk}.
        \end{equation}
\end{lemma}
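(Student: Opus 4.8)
The plan is to estimate $\mathscr{J}_2' - \widetilde{\mathscr{J}}_2$ directly via the BDG inequality \eqref{BDG}, exploiting the fact that both terms are stochastic integrals against the same white-noise increment on the strip $(t,t+\varepsilon)\times\R$ with the same kernel $p_{t+\varepsilon-s}(y-x)$, so that their difference is
\[
        \mathscr{J}_2' - \widetilde{\mathscr{J}}_2 = \int_{(t,t+\varepsilon)\times\R}
        p_{t+\varepsilon-s}(y-x)\left[ f(c_\alpha u_s(x)) - f(c_\alpha u_t(x))\right]\, W(\d s\,\d y).
\]
Applying \eqref{BDG} and then the global Lipschitz bound on $f$ gives
\[
        \| \mathscr{J}_2' - \widetilde{\mathscr{J}}_2\|_k^2
        \le 4k c_\alpha^2\lip_f^2 \int_t^{t+\varepsilon}\d s\int_{-\infty}^\infty\d y\
        \left[ p_{t+\varepsilon-s}(y-x)\right]^2\, \| u_s(x) - u_t(x)\|_k^2 .
\]
The $y$-integral of $[p_{t+\varepsilon-s}(y-x)]^2$ is $\|p_{t+\varepsilon-s}\|_{L^2(\R)}^2 = \Gamma(1/\alpha)/(\alpha\pi (t+\varepsilon-s)^{1/\alpha})$ by \eqref{eq:L2:p}, so the spatial integration is explicit and leaves a one-dimensional integral in $s$.

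Next I would insert the temporal modulus-of-continuity estimate from Lemma \ref{lem:moments}, namely $\| u_s(x) - u_t(x)\|_k^2 \le A_{k,T} |s-t|^{(\alpha-1)/\alpha}$, which holds uniformly in $x$ and is exactly the right quantity here since $s \ge t$. Combining the two bounds,
\[
        \| \mathscr{J}_2' - \widetilde{\mathscr{J}}_2\|_k^2
        \le A_{k,T} \int_t^{t+\varepsilon} (t+\varepsilon-s)^{-1/\alpha} (s-t)^{(\alpha-1)/\alpha}\,\d s
        = A_{k,T}\, \varepsilon^{(\alpha-1)/\alpha} \int_0^1 (1-r)^{-1/\alpha} r^{(\alpha-1)/\alpha}\,\d r,
\]
after the substitution $s = t + \varepsilon r$. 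The remaining integral is a Beta integral $B\!\left(\tfrac{2\alpha-1}{\alpha}, \tfrac{\alpha-1}{\alpha}\right)$, which is finite precisely because Dalang's condition \eqref{cond:Dalang} forces $1/\alpha < 1$ and $(\alpha-1)/\alpha > 0$. Hence $\| \mathscr{J}_2' - \widetilde{\mathscr{J}}_2\|_k^2 \le A_{k,T}\,\varepsilon^{(\alpha-1)/\alpha}$, and invoking the convention \eqref{alpha:H} gives $(\alpha-1)/\alpha = 2H$, so that $\| \mathscr{J}_2' - \widetilde{\mathscr{J}}_2\|_k \le A_{k,T}\,\varepsilon^{Hk}$ — wait, rather $\le A_{k,T}\varepsilon^{H}$, i.e. $\E(|\mathscr{J}_2' - \widetilde{\mathscr{J}}_2|^k) \le A_{k,T}\varepsilon^{Hk}$. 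But the claim is $\varepsilon^{2Hk}$, twice as good; see the next paragraph.

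The main obstacle is that the naive argument above only yields exponent $Hk$, whereas the lemma asserts $2Hk$. The gain must come from replacing the crude Lipschitz-plus-temporal-regularity bound on $\|u_s(x)-u_t(x)\|_k$ with a finer decomposition: one writes $u_s(x) - u_t(x)$, for $s$ in the thin strip $(t,t+\varepsilon)$, as $\mathscr{J}_1$-type plus $\mathscr{J}_2$-type pieces relative to base time $t$, and observes that the $\mathscr{J}_2$-type contribution over a strip of width $s-t \le \varepsilon$ is itself $O(\varepsilon^{H})$ while being ``centered'' in a way that, after a second application of BDG and Fubini to interchange the two stochastic integrations, produces an extra factor of $\varepsilon^{H}$; equivalently, one can exploit that $f(c_\alpha u_s(x)) - f(c_\alpha u_t(x))$ is not just bounded in $L^k$ by $\varepsilon^H$ but is a martingale-difference-like increment whose stochastic-integral against $p_{t+\varepsilon-s}$ enjoys cancellation. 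Concretely I would either (a) iterate the BDG bound once more — bounding $\| u_s(x) - u_t(x)\|_k$ not by $(s-t)^H$ but by again splitting into its own $\mathscr{J}_1,\mathscr{J}_2$ and checking that the dangerous $\mathscr{J}_2$ piece contributes, after the outer integration $\int_t^{t+\varepsilon}(t+\varepsilon-s)^{-1/\alpha}(\cdots)\,\d s$, a total power $\varepsilon^{2(\alpha-1)/\alpha} = \varepsilon^{4H}$ in the squared norm — or (b) appeal to the already-proven Lemma \ref{lem:J2-J2'} together with Proposition \ref{pr:J2}'s structure to handle the $u_s(x)-u_t(x)$ increment by the same mechanism recursively. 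I expect route (a): write $u_s(x)-u_t(x) = \int_{(0,t)}[p_{s-r}(\cdot)-p_{t-r}(\cdot)]f\,\d W + \int_{(t,s)}p_{s-r}(\cdot)f\,\d W$, plug each into the outer BDG integral separately, and verify via \eqref{scaling}, the kernel tail bound, and elementary Beta-integral estimates that each resulting double integral is $O(\varepsilon^{2(\alpha-1)/\alpha})$ uniformly in $x$ and $t$, the finiteness of every constant again resting on $1<\alpha\le 2$. This bookkeeping — keeping the two stochastic integrations straight and confirming all the exponents add up to exactly $4H$ in the squared $L^k$ norm — is the crux; once it is done, $\E(|\mathscr{J}_2' - \widetilde{\mathscr{J}}_2|^k) \le A_{k,T}\,\varepsilon^{2Hk}$ follows and, combined with Lemma \ref{lem:J2-J2'}, establishes Proposition \ref{pr:J2}.
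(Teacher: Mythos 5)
Your first, ``naive'' computation is in fact the paper's entire proof --- the only problem is an arithmetic slip in your change of variables, and that slip is what sent you off on the speculative (and unexecuted) second half. With $s=t+\varepsilon r$ you have $\d s=\varepsilon\,\d r$ and $(t+\varepsilon-s)^{-1/\alpha}=\varepsilon^{-1/\alpha}(1-r)^{-1/\alpha}$, so the substitution produces the factor $\varepsilon\cdot\varepsilon^{-1/\alpha}\cdot\varepsilon^{(\alpha-1)/\alpha}=\varepsilon^{2(\alpha-1)/\alpha}$, not $\varepsilon^{(\alpha-1)/\alpha}$:
\begin{equation*}
\int_t^{t+\varepsilon}(t+\varepsilon-s)^{-1/\alpha}(s-t)^{(\alpha-1)/\alpha}\,\d s
=\varepsilon^{2(\alpha-1)/\alpha}\int_0^1(1-r)^{-1/\alpha}r^{(\alpha-1)/\alpha}\,\d r ,
\end{equation*}
and the Beta integral is finite since $1<\alpha\le2$. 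Because $(\alpha-1)/\alpha=2H$ under \eqref{alpha:H}, this gives $\|\mathscr{J}_2'-\widetilde{\mathscr{J}}_2\|_k^2\le A_{k,T}\,\varepsilon^{4H}$, i.e.\ $\E(|\mathscr{J}_2'-\widetilde{\mathscr{J}}_2|^k)\le A_{k,T}\,\varepsilon^{2Hk}$, which is exactly the claimed exponent. This is precisely how the paper argues: BDG, the Lipschitz bound on $f$, the temporal modulus bound of Lemma \ref{lem:moments}, the identity \eqref{eq:L2:p} for $\|p_{t+\varepsilon-s}\|_{L^2(\R)}^2$, and the Beta-type integral.

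Consequently your diagnosis that the simple argument ``only yields $\varepsilon^{Hk}$'' is wrong, and the entire second paragraph --- iterating BDG, decomposing $u_s(x)-u_t(x)$ into its own $\mathscr{J}_1,\mathscr{J}_2$ pieces, or invoking martingale-difference cancellation --- is unnecessary. It is also not a proof as written: options (a) and (b) are sketched but never carried out, and the claimed cancellation mechanism is never substantiated, so if the naive bound really had been insufficient your submission would have a genuine gap. As it stands, correct the power of $\varepsilon$ in the substitution, delete the second paragraph, and you have the paper's proof.
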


\begin{proof}
        We apply the BDG inequality \eqref{BDG},
        as we did in the derivation of \eqref{eq:JJ2}, in order
        to see that
        \begin{align}\notag
                &\left\| \mathscr{J}_2' - \widetilde{\mathscr{J}}_2\right\|_{L^k(\Omega)}^2 \notag \\
                &\qquad \le 4kc_{\alpha}^2\lip_f^2\int_t^{t+\varepsilon}\d s\int_{-\infty}^\infty\d y\
                        \left[ p_{t+\varepsilon-s}(y)\right]^2 \left\| u_s(x)
                        - u_t(x)\right\|_{L^k(\Omega)}^2\\
                &\qquad \le A_{k,T}\int_t^{t+\varepsilon}
                        \left\| p_{t+\varepsilon-s}\right\|_{L^2(\R)}^2 \left| s-t\right|^{(\alpha-1)/\alpha}
                        \,\d s. \notag
        \end{align}
        Therefore, \eqref{eq:L2:p} and a change of variables together show us that
        the preceding quantity
        is bounded above by 
        \begin{equation}
                A_{k,T}\int_0^\varepsilon s^{(\alpha-1)/\alpha}
                (\varepsilon-s)^{-1/\alpha}\,\d s
                =A_{k,T} \varepsilon^{2(\alpha-1)/\alpha}.
        \end{equation}
        The lemma follows from this and our convention
        \eqref{alpha:H} about the relation between $\al$ and $H$.
\end{proof}

\subsection{Estimation of $\mathscr{J}_1$ and proof of Theorem \ref{th:main}}

Now we turn our attention to the more interesting term $\mathscr{J}_1$ in
the decomposition \eqref{eq:J1J2}. The following localization argument paves 
the way for a successful analysis of $\mathscr{J}_1$: 
$p_t(x)\,\d x\approx \delta_0(\d x)$
when $t\approx 0$; therefore
one might imagine that there is a small regime of values of $s\in(0\,,t)$
such that $p_{t+\varepsilon-s}(y-x)-p_{t-s}(y-x)$ is highly localized [big within the regime,
and significantly smaller outside that regime].
Thus, we choose and fix a parameter $a\in(0\,,1)$---whose optimal value 
will be made explicit later on in \eqref{a}---and write
\begin{equation}\label{J1=J1'+J1tilde}
        \mathscr{J}_1 = \mathscr{J}_{1,a} + \mathscr{J}_{1,a}',
\end{equation}
where
\begin{align}\notag
        \mathscr{J}_{1,a} &:= \int_{(0,t-\varepsilon^a)\times\R}
                \left[ p_{t+\varepsilon-s}(y-x)-p_{t-s}(y-x)\right]
                f(c_\alpha u_s(y))\, W(\d s\,\d y),\\ \\ \notag
        \mathscr{J}_{1,a}' &:= \int_{(t-\varepsilon^a,t)\times\R}
                \left[ p_{t+\varepsilon-s}(y-x)-p_{t-s}(y-x)\right]
                f(c_\alpha u_s(y))\, W(\d s\,\d y).
\end{align}

We will prove that  the quantity $\mathscr{J}_{1,a}$ is  small 
as long as we choose $a\in(0\,,1)$ carefully; 
that is, $\mathscr{J}_1\approx \mathscr{J}_{1,a}'$ for a
good choice of $a$. And because $s\in(t-\varepsilon^a,t)$ is approximately
$t$, then we might expect that
$f(u_s(y)))\approx f(u_t(y))$ [for that correctly-chosen $a$], and
hence $\mathscr{J}_1\approx\mathscr{J}_{1,a}''$, where
\begin{equation}\label{eq:J1a''}\hskip-.2in
        \mathscr{J}_{1,a}'' := \int_{(t-\varepsilon^a,t)\times\R}
        \left[ p_{t+\varepsilon-s}(y-x)-p_{t-s}(y-x)\right]
        f(c_\alpha u_t(y))\, W(\d s\,\d y).
\end{equation}
Finally, we might notice that $p_{t+\varepsilon-s}$ and $p_{t-s}$
both act as point masses when $s\in(t-\varepsilon^a,t)$, and therefore
we might imagine that $\mathscr{J}_1\approx \mathscr{J}_{1,a}''
\approx\widetilde{\mathscr{J}}_{1,a}$,
where
\begin{equation}\label{eq:J1atilde}\hskip-.2in
        \widetilde{\mathscr{J}}_{1,a} :=  f(c_\alpha u_t(x))\cdot
        \int_{(t-\varepsilon^a,t)\times\R}
        \left[ p_{t+\varepsilon-s}(y-x)-p_{t-s}(y-x)\right]W(\d s\,\d y).
\end{equation}
All of this turns out to be true; it remains to find the correct choice[s] for the parameter $a$
so that the errors in the mentioned approximations remain sufficiently small
for our later needs.  Recall the parameter $\mathcal{G}_H$ from
\eqref{G}. Before we continue, let us first document the end result of this forthcoming
effort. We will prove it subsequently. 

\begin{proposition}\label{pr:J1}
        For every $T>0$ and $k\in[2\,,\infty)$
        there exists a finite constant $A_{k,T}$ such that 
        uniformly for all $\varepsilon\in(0\,,1)$, $x\in\R$, and $t\in[0\,,T]$,
        \begin{align}\notag
                &\E\left(\left| \mathscr{J}_1 -f(c_\alpha u_t(x))\cdot
                        \int_{(0,t)\times\R}
                        \left[ p_{t+\varepsilon-s}(y-x)-p_{t-s}(y-x)\right]W(\d s\,\d y)\right|^k\right)\\
                &\hskip3in\le A_{k,T}\,\varepsilon^{\mathcal{G}_H k}.
        \end{align}
\end{proposition}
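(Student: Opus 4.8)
The plan is to establish Proposition~\ref{pr:J1} by the four-step chain of approximations outlined just above its statement, namely
\[
        \mathscr{J}_1 \approx \mathscr{J}_{1,a}' \approx \mathscr{J}_{1,a}'' \approx \widetilde{\mathscr{J}}_{1,a} \approx f(c_\alpha u_t(x))\cdot\int_{(0,t)\times\R}\left[p_{t+\varepsilon-s}(y-x)-p_{t-s}(y-x)\right]W(\d s\,\d y),
\]
with each ``$\approx$'' meaning ``within $A_{k,T}\varepsilon^{\mathcal{G}_H k}$ in $L^k(\Omega)$,'' after which Minkowski's inequality assembles the claim. The parameter $a\in(0\,,1)$ is kept free until the very end, when it is chosen to balance the competing error bounds; this is why the statement quietly asserts that \emph{some} choice works and the introduction promises to pin down $a$ in the forthcoming~\eqref{a}. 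Throughout I will use the BDG inequality~\eqref{BDG} exactly as in the proofs of Lemmas~\ref{lem:J2-J2'} and~\ref{lem:J2'-J2tilde}, together with the moment bounds of Lemma~\ref{lem:moments}, the scaling identity~\eqref{scaling}, the tail bound $p_1(z)\le\textnormal{const}\cdot(1+|z|)^{-(1+\alpha)}$, and the $L^2$ computation~\eqref{eq:L2:p}. The only genuinely new analytic ingredient needed is good control of the ``heat-kernel difference'' $D_{s}(y):=p_{t+\varepsilon-s}(y-x)-p_{t-s}(y-x)$; by the fundamental theorem of calculus $D_s(y)=\int_0^\varepsilon (\partial_r p_{t-s+r})(y-x)\,\d r$ and, via the PDE, $\partial_r p_r = \tfrac12\Delta_{\alpha/2}p_r$, so scaling gives the pointwise estimate $|D_s(y)|\le \textnormal{const}\cdot \varepsilon\,(t-s)^{-1-1/\alpha}\,q_1(|y-x|/(t-s)^{1/\alpha})$ for $r$-uniform time arguments, with $q_1$ integrable and rapidly decaying; for $s$ very close to $t$ one instead uses the crude bound $|D_s(y)|\le p_{t+\varepsilon-s}(y-x)+p_{t-s}(y-x)$. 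This dichotomy at scale $\varepsilon^a$ is precisely the ``localization'' heuristic of the preamble.

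Concretely I would proceed as follows. \textbf{Step 1 ($\mathscr{J}_1\approx\mathscr{J}_{1,a}'$):} bound $\|\mathscr{J}_{1,a}\|_k^2$ by BDG, using that $\|f(c_\alpha u_s(y))\|_k$ is bounded by $A_{k,T}$ (Lemma~\ref{lem:moments}) plus Lipschitz continuity of $f$, so the estimate reduces to $\int_0^{t-\varepsilon^a}\d s\int_\R\d y\,[D_s(y)]^2$; inserting the smooth-regime bound for $D_s$ and computing with scaling yields a power of the form $\varepsilon^2(\varepsilon^a)^{-\beta}$ for some $\beta=\beta(\alpha)$, which is the first constraint relating $a$ to $\mathcal{G}_H$. \textbf{Step 2 ($\mathscr{J}_{1,a}'\approx\mathscr{J}_{1,a}''$):} the integrand difference is $f(c_\alpha u_s(y))-f(c_\alpha u_t(y))$, controlled by $c_\alpha\lip_f\|u_s(y)-u_t(y)\|_k\le A_{k,T}|s-t|^{(\alpha-1)/(2\alpha)}$ from Lemma~\ref{lem:moments}; since now $s\in(t-\varepsilon^a,t)$ this temporal Hölder factor is at most $\varepsilon^{a(\alpha-1)/(2\alpha)}$, and BDG against $[D_s(y)]^2$ over this short window gives another bound of the form $\varepsilon^{2}(\varepsilon^a)^{-\beta'}\cdot\varepsilon^{a(\alpha-1)/\alpha}$ (or using the crude $D_s$ bound, $\|p_{t+\varepsilon-s}\|_{L^2}^2+\|p_{t-s}\|_{L^2}^2$ integrated over a length-$\varepsilon^a$ interval, weighted by the same Hölder factor). \textbf{Step 3 ($\mathscr{J}_{1,a}''\approx\widetilde{\mathscr{J}}_{1,a}$):} now the Wiener integrand is $[f(c_\alpha u_t(y))-f(c_\alpha u_t(x))]D_s(y)$, so spatial Hölder regularity $\|u_t(y)-u_t(x)\|_k\le A_{k,T}|y-x|^{(\alpha-1)/2}\wedge A_{k,T}$ enters; BDG produces $\int_{t-\varepsilon^a}^t\d s\int_\R\d y\,[D_s(y)]^2(|y-x|^{\alpha-1}\wedge1)$, which by scaling in $y$ (the extra $|y-x|^{\alpha-1}$ factor contributes a positive power of $(t-s)^{1/\alpha}\le\varepsilon^{a/\alpha}$ near the diagonal, and the crude bound handles the rest) is again $\varepsilon^{a\cdot\text{something}}$ times the $\mathscr{J}_{1,a}''$ scale. \textbf{Step 4 ($\widetilde{\mathscr{J}}_{1,a}\approx$ the full-range object):} the difference is $f(c_\alpha u_t(x))\cdot\int_{(0,t-\varepsilon^a)\times\R}D_s(y)\,W(\d s\,\d y)$, whose $L^k$ norm, by BDG and boundedness of $f(c_\alpha u_t(x))$ in $L^k$ (Lemma~\ref{lem:moments} with the Lipschitz bound), is controlled by the \emph{same} integral $\int_0^{t-\varepsilon^a}\d s\int_\R[D_s(y)]^2\,\d y$ already estimated in Step~1. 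Collecting the four bounds, every exponent is an affine function of $a$; one then sets $a$ so that the smallest of these exponents equals $\mathcal{G}_H k$, which will force the choice recorded in~\eqref{a}, and Remark~\ref{rem:GH>H}'s inequality $\mathcal{G}_H/H<2$ is exactly what guarantees such an $a\in(0\,,1)$ exists.

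The main obstacle is Step~1 (and, through the shared integral, Step~4): getting the \emph{sharp} power of $\varepsilon$ out of $\int_0^{t-\varepsilon^a}\d s\int_\R[D_s(y)]^2\,\d y$ requires the differentiated heat-kernel bound $|\partial_r p_r(z)|\le\textnormal{const}\cdot r^{-1-1/\alpha}q_1(|z|/r^{1/\alpha})$ with $q_1$ decaying like $(1+|z|)^{-(1+\alpha)}$ — essentially the statement that $\Delta_{\alpha/2}p_1$ has the same tail as $p_1$ — and then a careful split of the $s$-integral according to whether $t-s$ is larger or smaller than the spatial scale, so that the $\varepsilon^2$ from the time-derivative interval is not wasted. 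A too-crude use of $|D_s|\le p_{t+\varepsilon-s}+p_{t-s}$ everywhere loses the $\varepsilon^2$ gain and gives only the weaker exponent $2(\alpha-1)/\alpha=4H$, which after optimization over $a$ would not reach $\mathcal{G}_H k$; the whole point of introducing the cutoff at $\varepsilon^a$ is to use the fine bound on $(0,t-\varepsilon^a)$ and the crude bound only on the short interval $(t-\varepsilon^a,t)$ in Steps~2 and~3. Once this estimate on $D_s$ is in hand, the remaining computations are of exactly the same type as those already carried out for $\mathscr{J}_2$, and the optimization over $a$ is a one-line calculation producing~\eqref{a} and the exponent $\mathcal{G}_H=2H/(1+H)$.
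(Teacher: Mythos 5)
Your plan has the same architecture as the paper's proof: the cutoff at time scale $\varepsilon^a$, the chain $\mathscr{J}_1\approx\mathscr{J}_{1,a}'\approx\mathscr{J}_{1,a}''\approx\widetilde{\mathscr{J}}_{1,a}\approx f(c_\alpha u_t(x))\cdot\Lambda([0\,,t])$ with $\Lambda(Q):=\int_{Q\times\R}[p_{t+\varepsilon-s}(y-x)-p_{t-s}(y-x)]\,W(\d s\,\d y)$, the BDG inequality \eqref{BDG} plus Lemma \ref{lem:moments} at every link, and the balancing $1-a(1-H)=2aH$, which forces $a=1/(1+H)$ as in \eqref{a} and produces the exponent $\mathcal{G}_H$. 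The genuine difference is computational: where you estimate $\int_0^{t-\varepsilon^a}\d s\int_\R[p_{t+\varepsilon-s}(y-x)-p_{t-s}(y-x)]^2\,\d y$ in real space via $\partial_r p_r=\tfrac12\Delta_{\alpha/2}p_r$ and the pointwise bound $|\partial_r p_r(z)|\le\textnormal{const}\cdot r^{-1-1/\alpha}\left(1+|z|/r^{1/\alpha}\right)^{-(1+\alpha)}$ (true, by scaling and the decay of $p_1$ and $p_1'$, but nowhere stated in the paper, so you would have to prove it), the paper simply passes to Fourier variables and uses Plancherel together with $|1-\e^{-\varepsilon|\xi|^\alpha}|\le 1\wedge\varepsilon|\xi|^\alpha$; both routes give $\|\mathscr{J}_{1,a}\|_k\le A_{a,k,T}\,\varepsilon^{1-a(1-H)}$ (Lemma \ref{lem:J1a}), the Fourier one being shorter and self-contained. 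Your Steps 2 and 3, with the crude kernel bounds on the short window multiplied by the temporal, respectively spatial, H\"older factors from Lemma \ref{lem:moments}, are exactly Lemmas \ref{lem:J1a'-J1a''} and \ref{lem:J1a''-J1atilde}, each yielding $\varepsilon^{2aH}$ in $L^k(\Omega)$.

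Two repairs. In Step 4 you cannot invoke BDG for $f(c_\alpha u_t(x))\cdot\Lambda([0\,,t-\varepsilon^a])$ directly: the factor $f(c_\alpha u_t(x))$ is $\mathscr{F}_t$-measurable, so the product is not the stochastic integral of a predictable integrand over $(0\,,t-\varepsilon^a)$. The paper first decouples by the Cauchy--Schwarz inequality, bounding the $k$-th moment by $\|f(c_\alpha u_t(x))\|_{2k}^k\,\|\Lambda([0\,,t-\varepsilon^a])\|_{2k}^k$, and then observes that $\Lambda([0\,,t-\varepsilon^a])$ is precisely $\mathscr{J}_{1,a}$ for $f\equiv1$, so Lemma \ref{lem:J1a} applies verbatim; the quantitative input is indeed the Step-1 integral you identified, but this decoupling step is needed. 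Separately, two side remarks in your discussion are off: a squared-norm exponent $2(\alpha-1)/\alpha=4H$ would be stronger, not weaker, than $2\mathcal{G}_H=4H/(1+H)$ (the real failure of the crude bound on the long interval $(0\,,t-\varepsilon^a)$ is that it produces no smallness at all), and the balanced value $a=1/(1+H)$ lies in $(0\,,1)$ automatically, so the inequality $\mathcal{G}_H<2H$ is a consequence of $a<1$ rather than the condition enabling the choice; what Remark \ref{rem:GH>H} is actually needed for, later in the proof of Theorem \ref{th:SDE}, is $\mathcal{G}_H>H$. Neither point affects the soundness of your overall plan.
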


Thanks to \eqref{eq:u-u} and Minkowski's inequality,
Theorem \ref{th:main} follows easily from Propositions
\ref{pr:J2} and \ref{pr:J1}. It remains to prove Proposition \ref{pr:J1}.

We begin with a sequence of lemmas that
make precise the various formal appeals  to ``$\approx$'' in the preceding discussion.
As a first step in this direction, let us dispense with the ``small'' term $\mathscr{J}_{1,a}$.

\begin{lemma}\label{lem:J1a}
        For all $k\in[2\,,\infty)$ and $a\in(0\,,1)$ there exists a finite constant $A_{a,k,T}$ such that
        uniformly for all $\varepsilon\in(0\,,1)$,
        \begin{equation}
                \sup_{x\in\R}\sup_{t\in[0,T]}\E\left(\left| \mathscr{J}_{1,a}\right|^k\right)
                \le A_{a,k,T}\, \varepsilon^{[1-a(1-H)]k}.
        \end{equation}
\end{lemma}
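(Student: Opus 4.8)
The plan is to reduce $\|\mathscr{J}_{1,a}\|_k^2$ to a purely deterministic double integral via the BDG inequality \eqref{BDG}, exactly as in the derivation of \eqref{eq:JJ2}. Since $u$ is adapted and continuous, the integrand of $\mathscr{J}_{1,a}$ is predictable, so \eqref{BDG}, together with the Lipschitz bound $|f(z)|\le|f(0)|+\lip_f|z|$ and the first moment estimate of Lemma \ref{lem:moments}, gives
\[
        \|\mathscr{J}_{1,a}\|_k^2 \le A_{k,T}\int_0^{t-\varepsilon^a}\d s\int_{-\infty}^\infty\d y\
        \left[p_{t+\varepsilon-s}(y-x)-p_{t-s}(y-x)\right]^2 .
\]
After the substitution $r=t-s$ and using translation invariance of $p_r$ to eliminate $x$, it remains to bound $\int_{\varepsilon^a}^{t}\d r\int_{-\infty}^\infty\d y\,[p_{r+\varepsilon}(y)-p_r(y)]^2$; enlarging the $r$-range to $(\varepsilon^a,\infty)$ (the integrand is nonnegative) makes this quantity independent of $t\in[0,T]$, which will deliver the asserted uniformity in $t$ (and in $x$). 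The case $t<\varepsilon^a$ is trivial, since then $\mathscr{J}_{1,a}=0$.

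Next I would compute the inner spatial integral on the Fourier side. Using Plancherel and \eqref{eq:chf}, and factoring $e^{-(r+\varepsilon)|\xi|^\alpha/2}-e^{-r|\xi|^\alpha/2}=e^{-r|\xi|^\alpha/2}\big(e^{-\varepsilon|\xi|^\alpha/2}-1\big)$,
\[
        \int_{-\infty}^\infty\left[p_{r+\varepsilon}(y)-p_r(y)\right]^2\d y
        =\frac1{2\pi}\int_{-\infty}^\infty e^{-r|\xi|^\alpha}\left(1-e^{-\varepsilon|\xi|^\alpha/2}\right)^2\d\xi .
\]
Integrating in $r$ over $(\varepsilon^a,\infty)$, via $\int_{\varepsilon^a}^\infty e^{-r|\xi|^\alpha}\,\d r=|\xi|^{-\alpha}e^{-\varepsilon^a|\xi|^\alpha}$, leaves $\tfrac1{2\pi}\int_{-\infty}^\infty|\xi|^{-\alpha}e^{-\varepsilon^a|\xi|^\alpha}(1-e^{-\varepsilon|\xi|^\alpha/2})^2\,\d\xi$. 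Here the factor $(1-e^{-\varepsilon|\xi|^\alpha/2})^2$ vanishes like $|\xi|^{2\alpha}$ near $\xi=0$, which cancels the $|\xi|^{-\alpha}$ singularity and makes the integral converge; this is the one place where a small amount of care is needed, and it is precisely where one uses the cancellation inherent in the \emph{difference} $p_{r+\varepsilon}-p_r$ (a plain bound on $p_r^2$ alone would diverge).

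Finally I would rescale by $\eta=\varepsilon^{a/\alpha}\xi$, obtaining
\[
        \frac{\varepsilon^{a(1-1/\alpha)}}{2\pi}\int_{-\infty}^\infty|\eta|^{-\alpha}e^{-|\eta|^\alpha}
        \left(1-e^{-\varepsilon^{1-a}|\eta|^\alpha/2}\right)^2\d\eta ,
\]
and then bound $(1-e^{-u})^2\le u^2$ with $u=\varepsilon^{1-a}|\eta|^\alpha/2$; the remaining integral $\int_{-\infty}^\infty|\eta|^\alpha e^{-|\eta|^\alpha}\,\d\eta$ is a finite absolute constant, and one gains an extra factor $\varepsilon^{2(1-a)}$. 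The total exponent of $\varepsilon$ is $a(1-1/\alpha)+2(1-a)$, and since \eqref{alpha:H} gives $1/\alpha=1-2H$, this equals $2aH+2-2a=2[1-a(1-H)]$. Hence $\|\mathscr{J}_{1,a}\|_k^2\le A_{k,T}\,\varepsilon^{2[1-a(1-H)]}$, and raising both sides to the power $k/2$ (valid as $k\ge2$) gives $\E(|\mathscr{J}_{1,a}|^k)=\|\mathscr{J}_{1,a}\|_k^k\le A_{k,T}\,\varepsilon^{[1-a(1-H)]k}$, which is the claim. I do not anticipate any real obstacle: once the BDG reduction is in place the whole matter is the single deterministic computation above, and the only things to watch are the convergence of the Fourier integral at the origin and the exact accounting of exponents — no estimate sharper than $(1-e^{-u})^2\le u^2$ is needed, and in fact the constant one obtains depends only on $k$ and $T$, which is even stronger than the stated $A_{a,k,T}$.
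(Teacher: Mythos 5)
Your proof is correct and takes essentially the same route as the paper: the BDG inequality \eqref{BDG} plus the linear growth of $f$ and Lemma \ref{lem:moments} reduce everything to the deterministic quantity $\int_{\varepsilon^a}^\infty\d r\int_{-\infty}^\infty[p_{r+\varepsilon}(y)-p_r(y)]^2\,\d y$, which is then handled on the Fourier side via \eqref{eq:chf} and Plancherel. The only difference is in finishing the deterministic computation — you integrate exactly in $r$, rescale, and use $(1-\e^{-u})^2\le u^2$ globally, whereas the paper inserts an $\e^{T-s}$ weight and splits the $\xi$-integral at $\varepsilon^{-1/\alpha}$; both yield the exponent $2[1-a(1-H)]$ for the squared norm, and your accounting (including the observation that the constant can be taken independent of $a$) is accurate.
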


\begin{proof}
        We can modify the argument that led to \eqref{eq:JJ2},
        using  the BDG inequality \eqref{BDG}, in order to yield
        \begin{align}\notag
                &\left\| \mathscr{J}_{1,a}\right\|_{L^k(\Omega)}^2\\\notag
                &\le 4k \int_0^{t-\varepsilon^a}\d s\int_{-\infty}^\infty\d y\
                        \left[ p_{t+\varepsilon-s}(y-x)-p_{t-s}(y-x)\right]^2
                        \left\| f(c_\alpha u_s(y))\right\|_{L^k(\Omega)}^2\\
                &\le A_{k,T}\int_{\varepsilon^a}^T\d s\int_{-\infty}^\infty\d y\
                        \left[ p_{s+\varepsilon}(y)-p_s(y)\right]^2.
        \end{align}
        We  first bound $\int_{\varepsilon^a}^T\d s$ from above by
        $\e^T\cdot\int_{\varepsilon^a}^\infty \e^{-s}\,\d s$, and then apply \eqref{eq:chf} and
        Plancherel's formula in order to deduce the following bounds:
        \begin{equation}\begin{split}
                \left\| \mathscr{J}_{1,a}\right\|_{L^k(\Omega)}^2
                        &\le A_{k,T}\int_{\varepsilon^a}^\infty\e^{-s}\,\d s\int_{-\infty}^\infty\d \xi\
                        \e^{-2s|\xi|^\alpha}\left| 1 - \e^{-\varepsilon|\xi|^\alpha}\right|^2\\
                &\le A_{k,T}\int_{\varepsilon^a}^\infty\e^{-s}\,\d s\int_0^\infty\d \xi\
                        \e^{-2s\xi^\alpha}\left( 1\wedge\varepsilon^2\xi^{2\alpha}\right)\\
                &= A_{k,T} \int_0^\infty\left( 1\wedge\varepsilon^2\xi^{2\alpha}\right)
                        \e^{-2\varepsilon^a\xi^\alpha}
                        \frac{\d\xi}{1+\xi^\alpha},
        \end{split}\end{equation}
        since $0\le 1-\e^{-z}\le 1\wedge z$ for all $z\ge 0$. 
        Clearly,
        \begin{align}\notag
                \int_0^{\varepsilon^{-1/\al}}
                        \left( 1\wedge\varepsilon^2\xi^{2\al}\right)
                        \e^{-2\varepsilon^a\xi^\al} \frac{\d\xi}{1+\xi^\alpha}
                        &\le \varepsilon^2\int_0^{\varepsilon^{-1/\al}}\xi^\al
                        \e^{-2\varepsilon^a\xi^\al}\d\xi\\\notag
                &=      \varepsilon^{(\al-1)/\al}\int_0^1 x^\al \exp\left(
                        -\frac{2x^\alpha}{\varepsilon^{1-a}}\right)\,\d x\\\notag
                &\le\varepsilon^{(\al-1)/\al}\int_0^\infty x^\al \exp\left(
                        -\frac{2x^\alpha}{\varepsilon^{1-a}}\right)\,\d x\\
                &=\text{const}\cdot\varepsilon^{(2\al-a-\al a)/\alpha}.
        \end{align}
        Furthermore, 
        \begin{align}
                \int_{\varepsilon^{-1/\al}}^\infty
                        \left( 1\wedge\varepsilon^2\xi^{2\al}\right)
                        \e^{-2\varepsilon^a\xi^\al} \frac{\d\xi}{1+\xi^\alpha}
                        &\le \int_{\varepsilon^{-1/\al}}^\infty
                        \e^{-2\varepsilon^a\xi^\al}\d\xi\\\notag
                &\le \text{const}\cdot \exp\left( -2 \varepsilon^{-(1-a)}\right),
        \end{align}
        uniformly for all $\varepsilon\in(0\,,1)$. The  preceding two paragraphs
        together imply that
        \begin{equation}
                \E\left(\left| \mathscr{J}_{1,a}\right|^k\right)
                \le A_{a,k,T}\, \varepsilon^{(2\al - a - a\al)k/(2\al)},
        \end{equation}
        which proves the lemma, due to the relation \eqref{alpha:H} between $H$ and $\al$.
\end{proof}

\begin{lemma}\label{lem:J1a'-J1a''}
        For all $k\in[2\,,\infty)$ and $a\in(0\,,1)$
        there exists a finite constant $A_{a,k,T}$ such that
        uniformly for all $\varepsilon\in(0\,,1)$,
        \begin{equation}
                \sup_{x\in\R}\sup_{t\in[0,T]}
                \E\left(\left| \mathscr{J}_{1,a}'-\mathscr{J}_{1,a}''\right|^k\right)
                \le A_{a,k,T}\,\varepsilon^{2aHk}.
        \end{equation}
\end{lemma}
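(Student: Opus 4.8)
The plan is to bound $\mathscr{J}_{1,a}'-\mathscr{J}_{1,a}''$ by the same BDG-plus-Plancherel machinery already used for $\mathscr{J}_{1,a}$ and for $\mathscr{J}_2-\mathscr{J}_2'$, exploiting the fact that on the short time window $s\in(t-\varepsilon^a,t)$ the solution $u_s(y)$ is close to $u_t(y)$. Writing out the difference, we have
\begin{equation}
        \mathscr{J}_{1,a}'-\mathscr{J}_{1,a}'' = \int_{(t-\varepsilon^a,t)\times\R}
        \left[ p_{t+\varepsilon-s}(y-x)-p_{t-s}(y-x)\right]
        \left[ f(c_\alpha u_s(y))-f(c_\alpha u_t(y))\right]\, W(\d s\,\d y).
\end{equation}
First I would apply the BDG inequality \eqref{BDG}, then use the global Lipschitz property of $f$ together with the temporal-increment bound from Lemma \ref{lem:moments}, namely $\|u_s(y)-u_t(y)\|_k^2\le A_{k,T}|s-t|^{(\alpha-1)/\alpha}$, which is valid since $t-s<\varepsilon^a<1$. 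This reduces the problem to estimating
\begin{equation}
        A_{k,T}\int_{t-\varepsilon^a}^t \left| s-t\right|^{(\alpha-1)/\alpha}
        \left\| p_{t+\varepsilon-s}-p_{t-s}\right\|_{L^2(\R)}^2\,\d s
        = A_{k,T}\int_0^{\varepsilon^a} r^{(\alpha-1)/\alpha}
        \left\| p_{r+\varepsilon}-p_r\right\|_{L^2(\R)}^2\,\d r,
\end{equation}
after substituting $r=t-s$.

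The core of the argument is then the estimate of $\int_0^{\varepsilon^a} r^{(\alpha-1)/\alpha}\|p_{r+\varepsilon}-p_r\|_{L^2(\R)}^2\,\d r$. Using Plancherel's formula and \eqref{eq:chf},
\begin{equation}
        \left\| p_{r+\varepsilon}-p_r\right\|_{L^2(\R)}^2 = \frac{1}{\pi}
        \int_0^\infty \e^{-r\xi^\alpha}\left( 1-\e^{-\varepsilon\xi^\alpha/2}\right)^2\,\d\xi,
\end{equation}
and I would split the $\xi$-integral at $\xi=\varepsilon^{-1/\alpha}$, using $1-\e^{-\varepsilon\xi^\alpha/2}\le \tfrac12\varepsilon\xi^\alpha$ on the low range and $1-\e^{-\varepsilon\xi^\alpha/2}\le 1$ on the high range, exactly as in the proof of Lemma \ref{lem:J1a}. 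A scaling substitution $\xi=r^{-1/\alpha}z$ shows $\|p_{r+\varepsilon}-p_r\|_{L^2(\R)}^2$ is comparable to $r^{-1/\alpha}(1\wedge(\varepsilon/r)^{2})$ up to constants and lower-order terms; inserting this and integrating $\int_0^{\varepsilon^a}r^{(\alpha-1)/\alpha-1/\alpha}(1\wedge(\varepsilon/r))\,\d r$, splitting at $r=\varepsilon$, yields a bound of order $\varepsilon^{2a(\alpha-1)/\alpha}$ (the contribution from $r<\varepsilon$, where the window is so short that it behaves like the $\mathscr{J}_2$ estimate, is of order $\varepsilon^{2(\alpha-1)/\alpha}$, which is smaller). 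Finally I would invoke the convention \eqref{alpha:H}, under which $(\alpha-1)/\alpha = 2H$, so the bound becomes $A_{a,k,T}\varepsilon^{2aH k}$ after raising to the $k/2$ power, which is the claim.

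The main obstacle I anticipate is the bookkeeping in the $\xi$-integral for $\|p_{r+\varepsilon}-p_r\|_{L^2(\R)}^2$: one must be careful that the ``$1\wedge\varepsilon^2\xi^{2\alpha}$'' truncation is being compared against the correct scale, which here is $r$ rather than $r+\varepsilon$, and that the resulting $r$-integral $\int_0^{\varepsilon^a}$ converges at $r=0$ — it does, since $(\alpha-1)/\alpha - 1/\alpha = (\alpha-2)/\alpha > -1$ for $\alpha\in(1,2]$, i.e. $2H-2H_{\text{extra}}>-1$, but this needs checking. A secondary point requiring care is that the factor $|s-t|^{(\alpha-1)/\alpha}$ coming from Lemma \ref{lem:moments} is precisely what upgrades the exponent from $\varepsilon^{(\alpha-1)/\alpha}$ (the bare $\mathscr{J}_2$-type rate) to $\varepsilon^{2a(\alpha-1)/\alpha}$; without it one would only recover the weaker $a$-independent bound, so the two estimates must be combined at the right place inside the $\d s$-integral rather than bounded separately. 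Everything else is a routine repetition of the BDG-plus-scaling template already established in Lemmas \ref{lem:J2-J2'} and \ref{lem:J1a}.
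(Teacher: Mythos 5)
Your proposal is correct and follows essentially the same route as the paper: BDG inequality \eqref{BDG}, the Lipschitz property of $f$ combined with the temporal-increment bound of Lemma \ref{lem:moments}, and then Plancherel plus stable scaling to control $\int_0^{\varepsilon^a} r^{(\alpha-1)/\alpha}\|p_{r+\varepsilon}-p_r\|_{L^2(\R)}^2\,\d r$. The only (harmless) difference is at the last calculus step: you retain the truncation $\bigl(1-\e^{-\varepsilon\xi^\alpha/2}\bigr)^2\le 1\wedge(\varepsilon\xi^\alpha/2)^2$ and split at $r=\varepsilon$, which in fact gives the slightly sharper order $\varepsilon^{2(\alpha-1)/\alpha}$, whereas the paper rescales by $\varepsilon$ and simply bounds that factor by $1$ to land exactly on $\varepsilon^{2a(\alpha-1)/\alpha}$ --- either bound yields the stated estimate via \eqref{alpha:H}.
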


\begin{proof}
        We proceed as we did for \eqref{eq:JJ2}, using the BDG inequality
        \eqref{BDG}, in order to find that
        \begin{align}\label{eq:J'-J''}
                \left\| \mathscr{J}_{1,a}'-\mathscr{J}_{1,a}''\right\|_{L^k(\Omega)}^2
                        &\le A_{k,T}\cdot \int_0^{\varepsilon^a} s^{(\alpha-1)/\alpha}
                        \|p_{s+\varepsilon} - p_s  \|_{L^2(\R)}^2\,\d s\\\notag
                &= A_{k,T}\varepsilon^{(2\alpha-1)/\alpha}
                        \cdot\int_0^{\varepsilon^{a-1}} r^{(\al-1)/\al}
                        \| p_{\varepsilon(1+r)}-p_{\varepsilon r}\|_{L^2(\R)}^2\,\d r,
        \end{align}
        after a change of variables $[r:=s/\varepsilon]$.
        The scaling property \eqref{scaling} can be written in the following form:
        \begin{equation}\label{scaling1}
                p_{\varepsilon\tau}(y) = \varepsilon^{-1/\alpha}p_\tau(y/\varepsilon^{1/\alpha}),
        \end{equation}
        valid for all $\tau,\varepsilon>0$ and $y\in\R$. Consequently,
        \begin{equation}
                \| p_{\varepsilon(1+r)}-p_{\varepsilon r}
                \|_{L^2(\R)}^2 =
                \varepsilon^{-1/\alpha}\cdot\|p_{1+r}-p_r\|_{L^2(\R)}^2.
        \end{equation}
        
        Eq.\ \eqref{eq:chf} and the Plancherel theorem together imply that
        \begin{equation}\begin{split}
                \|p_{1+r} - p_r  \|_{L^2(\R)}^2 &= \frac{1}{2\pi}\int_{-\infty}^\infty
                        \e^{-r|z|^\alpha}\left( 1 - \e^{-|z|^\alpha/2}\right)^2\d z\\
                &\le\int_0^\infty \e^{-rz^\alpha}\,\d z\\
                &= \frac{\Ga(1/\al)}{\al r^{1/\al}},
        \end{split}\end{equation}
        for all $r>0$. Therefore, \eqref{eq:J'-J''} implies that
        \begin{equation}
                \left\| \mathscr{J}_{1,a}'-\mathscr{J}_{1,a}''\right\|_{L^k(\Omega)}^2
                \le A_{k,T}\varepsilon^{2(\alpha-1)/\alpha}\cdot \int_0^{\varepsilon^{a-1}}
                r^{(\alpha-2)/\alpha}\,\d r,
        \end{equation}
        which readily implies the lemma. 
\end{proof}

\begin{lemma}\label{lem:J1a''-J1atilde}
        For all $k\in[2\,,\infty)$ and $a\in(0\,,1)$ there exists a finite constant $A_{a,k,T}$ such that
        uniformly for all $\varepsilon\in(0\,,1)$,
        \begin{equation}
                \sup_{x\in\R}\sup_{t\in[0,T]}
                \E\left(\left| \mathscr{J}_{1,a}''-\widetilde{\mathscr{J}}_{1,a}\right|^k\right)
                \le A_{k,T}\,\varepsilon^{2aHk}.
        \end{equation}
\end{lemma}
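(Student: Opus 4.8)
The plan is to follow, almost verbatim, the proof of Lemma~\ref{lem:J2-J2'}, replacing the heat kernel $p_{t+\varepsilon-s}(y-x)$ appearing there by the difference $p_{t+\varepsilon-s}(y-x)-p_{t-s}(y-x)$ and the integration strip $(t\,,t+\varepsilon)$ by $(t-\varepsilon^a\,,t)$. First I would record, directly from \eqref{eq:J1a''} and \eqref{eq:J1atilde}, the identity
\[
        \mathscr{J}_{1,a}''-\widetilde{\mathscr{J}}_{1,a}
        = \int_{(t-\varepsilon^a,t)\times\R}\bigl[p_{t+\varepsilon-s}(y-x)-p_{t-s}(y-x)\bigr]
        \bigl[f(c_\alpha u_t(y))-f(c_\alpha u_t(x))\bigr]\,W(\d s\,\d y),
\]
treating the integrand (which is not predictable on $(t-\varepsilon^a\,,t)$ because of the factor $u_t$) exactly as was done in the proof of Lemma~\ref{lem:J1a'-J1a''}.

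Next I would apply the BDG inequality \eqref{BDG} in the same way as in the derivation of \eqref{eq:JJ2}, then use the global Lipschitz bound $\|f(c_\alpha u_t(y))-f(c_\alpha u_t(x))\|_k^2\le c_\alpha^2\lip_f^2\,\|u_t(y)-u_t(x)\|_k^2$ together with \emph{both} parts of Lemma~\ref{lem:moments} --- the uniform moment bound and the spatial modulus of continuity --- to get $\|u_t(y)-u_t(x)\|_k^2\le A_{k,T}\,(|y-x|^{\alpha-1}\wedge1)$. After the substitutions $y\mapsto y-x$ and $s\mapsto t-s$, this yields
\[
        \bigl\|\mathscr{J}_{1,a}''-\widetilde{\mathscr{J}}_{1,a}\bigr\|_k^2
        \le A_{k,T}\int_0^{\varepsilon^a}\d s\int_{-\infty}^\infty\d y\
        \bigl[p_{s+\varepsilon}(y)-p_s(y)\bigr]^2\,(|y|^{\alpha-1}\wedge1).
\]

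Finally I would discard the cancellation in $p_{s+\varepsilon}-p_s$ --- it is unnecessary here because the time strip has length only $\varepsilon^a$ --- by writing $[p_{s+\varepsilon}(y)-p_s(y)]^2\le 2p_{s+\varepsilon}(y)^2+2p_s(y)^2$ and invoking the elementary estimate $\int_0^\delta\d s\int_\R\d y\,p_s(y)^2(|y|^{\alpha-1}\wedge1)\le\text{const}\cdot\delta^{2(\alpha-1)/\alpha}$ (valid for $\delta\in(0\,,2]$, and already established inside the proof of Lemma~\ref{lem:J2-J2'}); since $\int_0^{\varepsilon^a}p_{s+\varepsilon}(y)^2\cdots\,\d s\le\int_0^{2\varepsilon^a}p_s(y)^2\cdots\,\d s$, both contributions are $\le\text{const}\cdot\varepsilon^{2a(\alpha-1)/\alpha}$. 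Recalling $(\alpha-1)/\alpha=2H$ from \eqref{alpha:H} and raising $\|\cdot\|_k^2\le A_{k,T}\,\varepsilon^{4aH}$ to the power $k/2$ gives the asserted bound $A_{k,T}\,\varepsilon^{2aHk}$. There is no genuine obstacle here; the one point that must not be skipped is that the extra factor $|y-x|^{\alpha-1}\wedge1$ --- coming from the spatial H\"older regularity of $u_t$ in Lemma~\ref{lem:moments} --- is essential: with only the crude bound $\|u_t(y)-u_t(x)\|_k^2\le A_{k,T}$ one would be left with $\int_0^{\varepsilon^a}\|p_{s+\varepsilon}-p_s\|_{L^2(\R)}^2\,\d s$, which is merely of order $\varepsilon^{2aH}$ and hence too weak by a factor of $2$ in the exponent.
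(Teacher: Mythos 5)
Your proposal is correct and follows essentially the same route as the paper: BDG \eqref{BDG}, the Lipschitz property of $f$, and the spatial H\"older estimate of Lemma \ref{lem:moments} reduce everything to the deterministic integral $\int_0^{\varepsilon^a}\d s\int_{\R}\d y\,[p_{s+\varepsilon}(y)-p_s(y)]^2(|y|^{\alpha-1}\wedge 1)$, which you then bound by $\text{const}\cdot\varepsilon^{2a(\alpha-1)/\alpha}=\text{const}\cdot\varepsilon^{4aH}$. The only (harmless) difference is the last step: the paper rescales via \eqref{scaling1} and computes $\int_0^{\varepsilon^{a-1}}r^{(\alpha-2)/\alpha}\,\d r$, while you discard the cancellation in $p_{s+\varepsilon}-p_s$ and reuse the bound already established in the proof of Lemma \ref{lem:J2-J2'} with $\delta\asymp\varepsilon^a$ --- which is legitimate (the paper, too, ends up bounding the squared difference by the individual squares), and your closing remark that the weight $|y|^{\alpha-1}\wedge 1$ is what upgrades $\varepsilon^{2aH}$ to $\varepsilon^{4aH}$ is exactly the point.
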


\begin{proof}
        We proceed as we did for \eqref{eq:JJ2}, apply the
        BDG inequality \eqref{BDG}, and  obtain the following bounds:
        \begin{align}\notag             
                &\left\| \mathscr{J}_{1,a}''-\widetilde{\mathscr{J}}_{1,a}\right\|_{L^k(\Omega)}^2\\
                        \notag  
                &\le A_k \int_{t-\varepsilon^a}^t\d s\int_{-\infty}^\infty\d y\
                        \left[ p_{t+\varepsilon-s}(y-x) - p_{t-s} (y-x) \right]^2\left\|
                        u_t(y) - u_t(x)\right\|_{L^k(\Omega)}^2\\
                &\le A_{k,T}\int_0^{\varepsilon^a}\,\d s\int_{-\infty}^\infty\d y\
                        \left[ p_{s+\varepsilon}(y) - p_s (y) \right]^2\left(|y|^{\alpha-1}\wedge
                        1\right)\\ \notag
                &\le A_{k,T} \varepsilon\int_0^{\varepsilon^{a-1}}\d r\int_{-\infty}^\infty\d y\
                        \left[ p_{\varepsilon(r+1)}(y)-p_{\varepsilon r}(y)\right]^2 |y|^{\alpha-1}.
        \end{align}
        Thanks to the scaling property \eqref{scaling1},
        we may obtain the following after a change of variables
        $[w:=y/\varepsilon^{1/\alpha}]$:
        \begin{align}
                &\left\| \mathscr{J}_{1,a}''-\widetilde{\mathscr{J}}_{1,a}\right\|_{L^k(\Omega)}^2\\
                        \notag  
                &\hskip.8in\le A_{k,T} \varepsilon^{2(\al-1)/\al}
                        \int_0^{\varepsilon^{a-1}}\d r\int_{-\infty}^\infty\d w
                        \left[ p_{r+1}(w)-p_r(w)\right]^2 |w|^{\alpha-1}.
        \end{align}
        Next we notice that
        \begin{align}\notag 
                \int_0^{\varepsilon^{a-1}}\d r\int_0^\infty\d w \left[ p_r(w)\right]^2 w^{\alpha-1} 
                        &=\int_0^{\varepsilon^{a-1}} r^{-2/\alpha}\,\d r\int_0^\infty\d w\left[
                        p_1(w/r^{1/\alpha})\right]^2 w^{\alpha-1}  \\\notag
                & =\int_0^{\varepsilon^{a-1}} r^{(\alpha-2)/\alpha}\,\d r\int_0^\infty\d x\left[
                        p_1(x)\right]^2 x^{\alpha-1}, \\
                & \leq \text{const} \cdot \varepsilon^{2(a-1)(\alpha-1)/\alpha}, 
        \end{align}
        where the last inequality uses the facts that:
        (i) $\alpha>1$; and (ii) $p_1(x)\le
        \text{const}\cdot (1+|x|)^{-1-\alpha}$ (see \cite[Proposition 3.3.1, p.\ 380]{Kh}).
        Therefore,
        \begin{equation}
          \left\| \mathscr{J}_{1,a}''-\widetilde{\mathscr{J}}_{1,a}\right\|_{L^k(\Omega)}^2 \leq A_{k,T} \varepsilon^{2a(\alpha-1)/\alpha},
        \end{equation}
        which proves the lemma, due to the relation \eqref{alpha:H} between $H$ and $\al$.
\end{proof}

\begin{proof}[Proof of Proposition \ref{pr:J1}]
        So far, the parameter $a$ has been an arbitrary real number in $(0\,,1)$.
        Now we choose and fix it as follows:
        \begin{equation}\label{a}
                a := \frac{1}{1+H}.
        \end{equation}
        Thus, for this particular choice of $a$,
        \begin{equation}
                1-a(1-H)=2aH  = \mathcal{G}_H,
        \end{equation}
        where $\mathcal{G}:=2H/(1+H)$ was defined in \eqref{G}.
        Because $\mathcal{G}_H<2H$ and because of \eqref{J1=J1'+J1tilde}, 
        Lemmas \ref{lem:J1a}, \ref{lem:J1a'-J1a''}, 
        and  \ref{lem:J1a''-J1atilde} together imply that, for this choice of $a$,
        \begin{equation}
                \E\left(\left| \mathscr{J}_1 -  \widetilde{\mathscr{J}}_{1,a}
                \right|^k\right) \le  A_{k,T}\,\varepsilon^{\mathcal{G}_H k},
        \end{equation}
        uniformly for all $\varepsilon\in(0\,,1)$, $x\in\R$, and $t\in[0\,,T]$. 
        Thanks to the definition \eqref{eq:J1atilde}  of
        $\widetilde{\mathscr{J}}_{1,a}$, it suffices to demonstrate the following with the
        same parameter dependencies as above:
        \begin{equation}
                \E\left(\left|  \widetilde{\mathscr{J}}_{1,a} - f(c_\alpha u_t(x))\cdot
                \Lambda([0\,,t])\right|^k\right) \le A_{k,T}\,\varepsilon^{\mathcal{G}_H k};
        \label{goal1}\end{equation}
        where $\Lambda(Q) := \int_{Q\times\R}[ 
        p_{t+\varepsilon-s}(y-x)-p_{t-s}(y-x)]W(\d s\,\d y)$
        for every interval $Q\subset[0\,,T]$. 
        
        Because $\widetilde{\mathscr{J}}_{1,a}
        =f(c_\alpha u_t(x))\times\Lambda([t-\varepsilon^a\,,t])$, Lemma
        \ref{lem:moments} shows that the left-hand side of
        \eqref{goal1} 
        \begin{equation}
                \E\left(\left|  \widetilde{\mathscr{J}}_{1,a} - f(c_\alpha u_t(x))\cdot
                \Lambda([0\,,t])\right|^k\right) \le A_{k,T}
                \sqrt{\E\left(\left| \Lambda([0\,,t-\varepsilon^a])
                \right|^{2k}\right)}.
        \end{equation}
        Since $\Lambda([0\,,t-\varepsilon^a])$
        is the same as the quantity $\mathscr{J}_{1,a}$ in the case that $f\equiv 1$,
        we may apply Lemma \ref{lem:J1a} to the linear equation \eqref{SHE-linear}
        with $f\equiv 1$ in order to see that 
        \begin{equation}
                \sqrt{\E\left(\left|
                \Lambda([0\,,t-\varepsilon^a])\right|^{2k}\right)}
                \le A_{k,T}\, \varepsilon^{^{\mathcal{G}_H k}},
        \end{equation}
        which implies \eqref{goal1}.
\end{proof}

\section{Proof of Theorem \ref{th:SDE}}
We conclude this article by proving Theorem \ref{th:SDE}. 

Let us define a Lipschitz-continuous function $f$ by 
\begin{equation}\label{f}
        f(x) :=\frac{2^H}{\kappa_H^2\sqrt 2}\, g(x)\qquad(x\in\R),
\end{equation}
where $\kappa_H$ was defined in \eqref{kaHdef}. Let us
also define a stochastic process
\begin{equation}
        Y_t := c_\alpha u_t(0)\qquad(t\ge 0),
\end{equation}
where the constant $c_\alpha[=\kappa_H]$ was defined in \eqref{A:alpha} and $u$ 
denotes the solution to the stochastic PDE \eqref{heat}.
Because of Remark \ref{rem:LeiNualart}
and the definition of $f$, we can see that: 
\begin{enumerate}
        \item[(i)] $Y_t=\ka_Hu_t(0)$; and 
        \item[(ii)] $u$ solves the stochastic PDE \eqref{SDEmodified}.
\end{enumerate}
We also remark that $g(x)=c_\al^22^{1/(2\al)}f(x)$.

We are assured by \eqref{eq:u:in:C}
that $Y\in\cap_{\gamma\in(0,H)}C^\gamma([0\,,t])$, up to a modification
[in the usual sense of stochastic processes]. Recall from \eqref{v}
the solution $v$ to the linear SPDE \eqref{SHE-linear}. 

Let $X$ be the $\fbm(H)$ from Proposition \ref{pr:bifBM:linear}
and choose and fix $t\in(0\,,T]$. Then
\begin{align}\notag
        \Theta &:= Y_{t+\ep} - Y_t - g(Y_t)(X_{t+\ep} - X_t)\\\notag
        &= Y_{t+\ep} - Y_t - c_\al^22^{1/(2\al)}f(Y_t)(X_{t+\ep} - X_t)\\\notag
        &= c_\al\left({u_{t+\ep}(0) - u_t(0) - f(c_\al u_t(0))
                \left[c_\al2^{1/(2\al)}X_{t+\ep} - c_\al2^{1/(2\al)}X_t\right]}\right)\\\notag
        &= c_\al \left( u_{t+\ep}(0) - u_t(0) - f(c_\al u_t(0)) (v_{t+\ep}(0) - v_t(0)) \right)\\
        &\hskip2in + c_\al f(c_\al u_t(0))(R_{t+\ep}-R_t).
\end{align}

We proved, earlier in Remark \ref{R:smooth_moments}, 
that $\|R_{t+\ep}-R_t\|_k\le A_{k,t}\,\ep$. 
Because $f$ is Lipschitz continuous, H\"older's inequality and \eqref{eq:moments}
together imply that $\|c_\al f(c_\al u_t(0))(R_{t+\ep}-R_t)\|_k \le A_{k,t}\,\ep,$
whence we obtain the bound,
\begin{equation}
        \sup_{t\in(0,T]}\E(\Theta^2)\le A_T\ep^{2\mathcal{G}_H},
\end{equation}
from Theorem \ref{th:main}. Since $\mathcal{G}_H>H$---see
Remark \ref{rem:GH>H}---the preceding displayed bound and Chebyshev's inequality
together imply that for every $\ep\in(0\,,1)$,  $\delta > 0$, 
and $b\in(H,\mathcal{G}_H)$,
\begin{align}
        \P\left\{{\left|{\frac{Y_{t+\ep} - Y_t}{X_{t+\ep} - X_t}
                - g(Y_t)}\right| > \de}\right\}
                &= \P\left\{{{\frac{|\Theta|}{|X_{t+\ep} - X_t|} }} > \de\right\}\\\notag
        &\le A_T\,\ep^{2(\mathcal{G}_H-b)}
                + \P\left\{|X_{t+\ep} - X_t|<\frac{\ep^b}{\delta} \right\}. 
\end{align}
The first term converges to zero as $\ep\to 0^+$ since $b<\mathcal{G}_H$. It remains
to prove that the second term also vanishes as $\ep\to 0^+$.
But since $X$ is $\fbm(H)$, the increment $X_{t+\ep}-X_t$
has the same distribution as $\ep^HZ$ where $Z$ is a standard 
normal random variable. Therefore,
\begin{equation}
        \sup_{t\in(0,T]}\P\left\{|X_{t+\ep} - X_t|<\frac{\ep^b}{\delta} \right\} =
        \P\left\{ |Z| \le \frac{\ep^{b-H}}{\delta}\right\},
\end{equation}
which goes to zero as $\ep\to 0^+$ since $b>H$.
\qed

\begin{spacing}{0.2}
\begin{small}
\end{small}\end{spacing}
\vskip.4in

\begin{small}
\noindent\textbf{Davar Khoshnevisan} (\texttt{davar@math.utah.edu})

\noindent Dept.\ Mathematics, Univ.\ Utah,
        Salt Lake City, UT 84112-0090

\medskip

\noindent\textbf{Jason Swanson} (\texttt{jason@swansonsite.com})

\noindent Dept.\ Mathematics, Univ.\ Central Florida,
        Orlando, FL 32816-1364

\medskip

\noindent\textbf{Yimin Xiao} (\texttt{xiao@stt.msu.edu})

\noindent Dept.\  Statistics \&\ Probability, 
        Michigan State Univ., East Lansing, MI 48824-3416

\medskip

\noindent\textbf{Lianng Zhang} (\texttt{lzhang81@cs.utah.edu})

\noindent Dept.\  Statistics \&\ Probability, 
        Michigan State Univ., East Lansing, MI 48824-3416

\end{small}

\end{document}